\let\footnote=\endnote
\pgfplotsset{compat=newest}
\tikzstyle{line} = [ draw, -latex']
\newtheorem{proof}{XXXXX}}
\definecolor{darkgreen}{rgb}{0.1, 0.8, 0.1}
\begin{document}


\RUNAUTHOR{Tuncer and Kocuk}

\RUNTITLE{Risk-Averse Antibiotics Time Machine Problem}

\TITLE{Risk-Averse Antibiotics Time Machine Problem}

\ARTICLEAUTHORS{%
\AUTHOR{Deniz Tuncer}
\AFF{Industrial Engineering Program, Sabancı University, 34956 Istanbul, Turkey, \EMAIL{\href{mailto:dtuncer@sabanciuniv.edu}{dtuncer@sabanciuniv.edu}}} 
\AUTHOR{Burak Kocuk}
\AFF{Industrial Engineering Program, Sabancı University, 34956 Istanbul, Turkey, \EMAIL{\href{mailto:burak.kocuk@sabanciuniv.edu}{burak.kocuk@sabanciuniv.edu}}}
}

\ABSTRACT{Antibiotic resistance, which is a serious healthcare issue, emerges due to uncontrolled and repeated antibiotic use that causes bacteria to mutate and develop resistance to antibiotics. The Antibiotics Time Machine Problem aims to come up with treatment plans that maximize the probability of reversing these mutations. Motivated by the severity of the problem, we develop a risk-averse approach and formulate a scenario-based mixed-integer linear program with a conditional value-at-risk objective function. We propose a risk-averse scenario batch decomposition algorithm that partitions the scenarios into manageable risk-averse subproblems, enabling the construction of lower and upper bounds. We develop several algorithmic enhancements in the form of stronger no-good cuts and symmetry breaking constraints in addition to scenario regrouping and warm starting. We conduct extensive computational experiments for static and dynamic versions of the problem on a real dataset and demonstrate the effectiveness of our approach. Our results suggest that risk-averse solutions can achieve significantly better worst-case performance compared to risk-neutral solutions with a slight decrease in terms of the average performance, especially for the dynamic version. Although our methodology is presented in the context of the Antibiotics Time Machine Problem, it can be adapted to other risk-averse problem settings in which the decision variables come from special ordered sets of type~one.}

\KEYWORDS{Stochastic programming, mixed-integer programming, conditional value-at-risk, antibiotic resistance.} 

\maketitle

%


\section{Introduction and Literature Review}
\label{sec:Intro}

Antibiotic resistance is a serious concern in modern medicine. Misuse or successive administration of antibiotics may cause mutations of the bacteria, which facilitate the resistance to the known antibiotics. Due to their immunization to the antibiotics, it may not be possible to cure the diseases caused by the antibiotic-resistant bacteria. 
2.8 million antibiotic-resistant infections  occurred in the US, and more than 35 thousand people lost their lives in 2019 \citep{reportglobalresistance} due to antibiotic resistance. A study on Chinese healthcare system \citep{chinesestudy} shows that about 600 thousand deaths have occurred due to bacterial antimicrobial resistance. The same study also shows that in China, 173 million unreasonable prescriptions of antibiotics are given each year, which also contributes to the antibiotic resistance. Adverse effects of some overprescribed antibiotics, such as a widely used antibiotic, Amoxicillin-clavulanic acid is shown to cause life threatening adverse effects \citep{Chang2007}, which further suggests that antibiotic administration should be done with utmost care.

The antibiotics that humankind have developed during the history are in danger of becoming obsolete. Therefore, we need new strategies to combat antibiotic resistance. While developing new antibiotics seem to be the obvious solution, research and development process of a new antibiotic is quite costly. Even if the necessary funding is found, the process itself requires extensive human resource and the time it takes to develop and start using the drug is unpredictable \citep{Ventola2015}. A recent study by \citet{Liu2024} shows that only 18 novel antibiotics have been approved since 2014. Since development of new antibiotics is unlikely to happen soon, as an interim solution, researchers have proposed the method of repurposing drugs. Drug repurposing aims to break the antibiotic resistance by co-administering the antibiotics with non-antibiotic drugs \citep{Brown2015}. This repurposing idea may increase the lifespan of the existing antibiotics while the new antibiotics are being developed. While drug repurposing seems to be the recipe for slowing down the antibiotic resistance, there exists concerns about the future efficiency of the approach, since it may cause further antibiotic resistance that we may not be able to cure \citep{Talat2022}.

{In search of a remedy for the antibiotic resistance, some of the recent works on the problem focus on artificial intelligence (AI) and deep learning-based methods \citep{Liu2024}. The study by \citet{Liu2023} aims to create new antibiotics by utilizing deep learning methods to develop molecules that have activity against the Acinetobacter baumannii bacterium.  \citep{Wong2023} construct explainable graphs using deep learning methods to predict the compounds that might have high antibiotic activity. Apart from drug development, AI and deep learning methods are utilized to predict the antibiotic resistance of the bacteria that cause the infections \citep{Zagajewski2023,Ding2024}. These methods are also utilized for identifying the drugs that may be eligible for drug repurposing \citep{Aggarwal2024}, which can be a future research direction for use of AI for antibiotic resistance problem.}

{It is found out that when some antibiotics are applied sequentially, their cross effect may change the probability of the bacteria developing antibiotic resistance, called \textit{collateral sensitivity} \citep{imamovic2013,nichol2015steering}. Although there are opposing ideas  that drug sequencing is unlikely to prevent antibiotic resistance \citep{bergstrom2004ecological,vanduijn2018}, some studies claim that the collateral sensitivity can eliminate antibiotic resistance \citep{Tyers2019,Maltas2019,Maltas2021}. Another study \citep{Aulin2021} shows that the order of drug administration is crucial to the efficiency of the collateral sensitivity based treatments.}

A further complicating factor in these studies is the issue of quantifying the antibiotic resistance phenomenon \citep{mira2015rational}. Typically, the effect of an antibiotic can be modeled as a Markov chain, where the states correspond to the genotypes of the bacterium identified by a string of alleles and probabilities are computed as a function of the growth rates measurements of these genotypes under the administration of the antibiotic. In addition to the stochasticity of an antibiotic administration, another source of randomness arises due to the fact that the growth rates measurements, and consequently,  the transition probability  matrices governing the Markov chain can be uncertain as well \citep{mira2017statistical}. 
%
%

The Antibiotics Time Machine Problem  is an NP-Hard combinatorial optimization problem \citep{tran2017antibiotics} that aims to find the best   treatment plan of finite length using a set of given antibiotics so that the probability of reversing the antibiotic resistance is maximized. The first paper to tackle this problem is \citet{mira2015rational}, in which the authors use a complete enumeration technique to find an optimal sequence. Recently, a compact-size mixed-integer linear programming (MILP) formulation is proposed in \citet{kocuk2022optimization} that works much more efficiently. We will refer to the version studied in these three papers as \textit{deterministic} since the probability matrices are computed using a single growth rate measurement and \textit{static} since the  antibiotic sequence is determined at the beginning of the treatment.
Later, the work of \citet{kocuk2022optimization} is extended to the cases of \textit{stochastic} and  \textit{dynamic} versions of the Antibiotics Time Machine Problem  in \citet{mesum2024}. Here, the stochastic version refers to the case where multiple growth rate measurements are available and the probability matrices are modeled as random variables whereas the dynamic version refers to the decision policies in which the treatment plan is adaptive to the observed state transitions. 
It is proven in \citet{mesum2024}
  that the stochastic static version of the problem can be solved as a special instance of the  deterministic static version, which can be solved as an MILP. On the other hand, the stochastic dynamic version of the problem can be solved as a special instance of the  deterministic dynamic version, which can be solved using a dynamic program (DP). 

Although the work by \citet{mesum2024} presents the first attempt to solve the   Antibiotics Time Machine Problem under uncertainty, it assumes that the objective function is the maximization of the \textit{expected} probability of reversing the antibiotic resistance. However, this \textit{risk-neutral} approach may not be suitable for a healthcare application due to its sensitive nature. Therefore, in this paper, we study a \textit{risk-averse} approach, in which we maximize the expected probability of the worst $\alpha$ fraction of the realizations akin to a Conditional-Value-at-Risk (CVaR)-type  objective function. As opposed to the expectation, which is a linear function, CVaR is not a linear function, therefore, the theoretical results from \citet{mesum2024} are not applicable. It turns out that both   static and dynamic versions of the risk-averse Antibiotics Time Machine Problem are quite challenging computationally. 

In this paper, we study scenario-based MILP formulations for both static and dynamic versions of the risk-averse Antibiotics Time Machine Problem for the first time. Since these MILPs do not scale well with the number of scenarios, we develop a risk-averse scenario batch decomposition algorithm, originally developed for risk-neutral  \citep{ahmed2013decomposition} and risk-averse problems \citep{deng2018}. As opposed to these papers, which utilize single-scenario subproblems, we use batch scenario subproblems in order to obtain stronger dual bounds. To further improve the computational efficiency of the standard algorithm, we derive two types of  no-good cuts that exploit problem structure and solution symmetry. In addition, we introduce scenario regrouping and warm start techniques that help accelerate the decomposition algorithm. Although these contributions to the literature are presented in the context of the Antibiotics Time Machine Problem, most of them are applicable for risk-averse optimization problems in which the decision variables come from special ordered sets  of type one. 

Our extensive computational experiments conducted using real growth rate measurements \citep{mira2017statistical} give interesting insights. As expected, switching from the risk-neutral solution to the risk-averse solution brings considerable gains from the worst case realizations albeit a loss from the average performance. Interestingly, as the treatment length increases, the amount of gain from the worst case realizations starts to dominate the amount of loss from the average performance. In addition, the benefit of the risk-averse approach is even more pronounced for the dynamic version for which solutions with almost negligible sacrifice from the average performance may increase the worst case realizations considerably.

The rest of the paper is organized as follows: In Section~\ref{sec:ProblemStatement}, we introduce  the deterministic Antibiotics Time Machine Problem. In Sections~\ref{sec:ABRStatic} and~\ref{sec:ABR-dynamic}, we study   static and dynamic versions of the risk-averse Antibiotics Time Machine Problem, respectively. We propose  decomposition algorithms for their MILP formulations and provide several algorithmic enhancements.   In Section~\ref{sec:compresults}, we present the results of our extensive computational experiments. Finally, we conclude our paper   in Section~\ref{sec:conclusions}.

\section{Deterministic Problem}
\label{sec:ProblemStatement}

In this section, we present the deterministic version of the Antibiotics Time Machine Problem studied in \citet{mira2015rational,kocuk2022optimization} as this will make it easier to explain the stochastic versions later on.
Suppose that we are interested in a bacterium with $a$ many alleles. We  denote each  genotype by the vector $\bm{g} \in \{0,1\}^a$, where $g_l=0$ (resp. $g_l=1$) indicates an unmutated (resp. mutated) allele $l$. We call the special genotype associated with the vector $\bm{g}=0$ as the ``wild type", which is the desired state where the bacterium is sensitive to an antibiotic treatment. 

Suppose we have $K$ many antibiotics at hand, each of which comes with a probability transition matrix $ \bm{M^k} \in \mathbb{R}^{d \times d} $, $k=1,\dots,K$, where $d=2^a$
(we explain how these matrices are obtained from experimental data  in Appendix~\ref{app:matrixConstruction}). In particular,  the probability of transitioning from genotype~$i$ to genotype~$j$ when antibiotic~$k$ is applied is denoted by ${M_{ij}^k}$. 
%
%
Due to the generally accepted principle of Strong Selection Weak Mutation (see, e.g., \citet{gillespie1983simple}, \citet{gillespie1984molecular}), we will assume that at most one allele can change in each transition. In other words, the probability of a transition between genotype $\bm{g}$ and genotype $\bm{g'}$ is zero if $\| \bm{g}-\bm{g'}\|_1 > 1 $. 

Let us   illustrate the setup with an example in Figure~\ref{fig:abr-image} in which there are $d=2^3$ genotypes and $K=2$  antibiotics represented by different types of arcs. An arc indicates that there is a positive probability of going from a state to another under that antibiotic. For example, starting from genotype 010, if we apply antibiotic 1 (solid line), the stochastic process can go to the genotype 000 or genotype 011. If the current state is 101 and we apply antibiotic 2 (dashed line), the next state would be 111.

\tikzset{every picture/.style={line width=0.75pt}} 
\begin{figure}[H]
\centering

\caption{Illustration of an Antibiotics Time Machine Problem instance with $a=3$ alleles and $K=2$ antibiotics.} \label{fig:abr-image}

\tikzset{every picture/.style={line width=0.75pt}} 

\begin{tikzpicture}[x=0.675pt,y=0.675pt,yscale=-1,xscale=1]

\draw   (188,136) .. controls (188,122.19) and (199.19,111) .. (213,111) .. controls (226.81,111) and (238,122.19) .. (238,136) .. controls (238,149.81) and (226.81,161) .. (213,161) .. controls (199.19,161) and (188,149.81) .. (188,136) -- cycle ;
\draw   (268,77) .. controls (268,63.19) and (279.19,52) .. (293,52) .. controls (306.81,52) and (318,63.19) .. (318,77) .. controls (318,90.81) and (306.81,102) .. (293,102) .. controls (279.19,102) and (268,90.81) .. (268,77) -- cycle ;
\draw   (268,137) .. controls (268,123.19) and (279.19,112) .. (293,112) .. controls (306.81,112) and (318,123.19) .. (318,137) .. controls (318,150.81) and (306.81,162) .. (293,162) .. controls (279.19,162) and (268,150.81) .. (268,137) -- cycle ;
\draw   (268,197) .. controls (268,183.19) and (279.19,172) .. (293,172) .. controls (306.81,172) and (318,183.19) .. (318,197) .. controls (318,210.81) and (306.81,222) .. (293,222) .. controls (279.19,222) and (268,210.81) .. (268,197) -- cycle ;
\draw   (348,76) .. controls (348,62.19) and (359.19,51) .. (373,51) .. controls (386.81,51) and (398,62.19) .. (398,76) .. controls (398,89.81) and (386.81,101) .. (373,101) .. controls (359.19,101) and (348,89.81) .. (348,76) -- cycle ;
\draw   (348,136) .. controls (348,122.19) and (359.19,111) .. (373,111) .. controls (386.81,111) and (398,122.19) .. (398,136) .. controls (398,149.81) and (386.81,161) .. (373,161) .. controls (359.19,161) and (348,149.81) .. (348,136) -- cycle ;
\draw   (348,196) .. controls (348,182.19) and (359.19,171) .. (373,171) .. controls (386.81,171) and (398,182.19) .. (398,196) .. controls (398,209.81) and (386.81,221) .. (373,221) .. controls (359.19,221) and (348,209.81) .. (348,196) -- cycle ;
\draw   (428,136) .. controls (428,122.19) and (439.19,111) .. (453,111) .. controls (466.81,111) and (478,122.19) .. (478,136) .. controls (478,149.81) and (466.81,161) .. (453,161) .. controls (439.19,161) and (428,149.81) .. (428,136) -- cycle ;
\draw [color={rgb, 255:red, 208; green, 2; blue, 27 }  ,draw opacity=1 ]   (226.5,114.75) -- (265.92,83.98) ;
\draw [shift={(267.5,82.75)}, rotate = 142.03] [color={rgb, 255:red, 208; green, 2; blue, 27 }  ,draw opacity=1 ][line width=0.75]    (10.93,-3.29) .. controls (6.95,-1.4) and (3.31,-0.3) .. (0,0) .. controls (3.31,0.3) and (6.95,1.4) .. (10.93,3.29)   ;
\draw [color={rgb, 255:red, 208; green, 2; blue, 27 }  ,draw opacity=1 ]   (318.17,67.33) -- (346.67,67.33) ;
\draw [shift={(348.67,67.33)}, rotate = 180] [color={rgb, 255:red, 208; green, 2; blue, 27 }  ,draw opacity=1 ][line width=0.75]    (10.93,-3.29) .. controls (6.95,-1.4) and (3.31,-0.3) .. (0,0) .. controls (3.31,0.3) and (6.95,1.4) .. (10.93,3.29)   ;
\draw [color={rgb, 255:red, 208; green, 2; blue, 27 }  ,draw opacity=1 ]   (433,154.25) -- (397.56,182.5) ;
\draw [shift={(396,183.75)}, rotate = 321.43] [color={rgb, 255:red, 208; green, 2; blue, 27 }  ,draw opacity=1 ][line width=0.75]    (10.93,-3.29) .. controls (6.95,-1.4) and (3.31,-0.3) .. (0,0) .. controls (3.31,0.3) and (6.95,1.4) .. (10.93,3.29)   ;
\draw [color={rgb, 255:red, 208; green, 2; blue, 27 }  ,draw opacity=1 ]   (439.5,113.25) -- (399.58,82.47) ;
\draw [shift={(398,81.25)}, rotate = 37.64] [color={rgb, 255:red, 208; green, 2; blue, 27 }  ,draw opacity=1 ][line width=0.75]    (10.93,-3.29) .. controls (6.95,-1.4) and (3.31,-0.3) .. (0,0) .. controls (3.31,0.3) and (6.95,1.4) .. (10.93,3.29)   ;
\draw [color={rgb, 255:red, 208; green, 2; blue, 27 }  ,draw opacity=1 ]   (399.17,130.25) -- (425.67,130.25) ;
\draw [shift={(427.67,130.25)}, rotate = 180] [color={rgb, 255:red, 208; green, 2; blue, 27 }  ,draw opacity=1 ][line width=0.75]    (10.93,-3.29) .. controls (6.95,-1.4) and (3.31,-0.3) .. (0,0) .. controls (3.31,0.3) and (6.95,1.4) .. (10.93,3.29)   ;
\draw [color={rgb, 255:red, 208; green, 2; blue, 27 }  ,draw opacity=1 ]   (268,128.42) -- (238.5,128.42) ;
\draw [shift={(236.5,128.42)}, rotate = 360] [color={rgb, 255:red, 208; green, 2; blue, 27 }  ,draw opacity=1 ][line width=0.75]    (10.93,-3.29) .. controls (6.95,-1.4) and (3.31,-0.3) .. (0,0) .. controls (3.31,0.3) and (6.95,1.4) .. (10.93,3.29)   ;
\draw [color={rgb, 255:red, 208; green, 2; blue, 27 }  ,draw opacity=1 ]   (269.81,184.19) -- (228,157.25) ;
\draw [shift={(226,156)}, rotate = 33.0] [color={rgb, 255:red, 208; green, 2; blue, 27 }  ,draw opacity=1 ][line width=0.75]    (10.93,-3.29) .. controls (6.95,-1.4) and (3.31,-0.3) .. (0,0) .. controls (3.31,0.3) and (6.95,1.4) .. (10.93,3.29)   ;
\draw [color={rgb, 255:red, 208; green, 2; blue, 27 }  ,draw opacity=1 ]   (386,52.75) .. controls (403.73,15.32) and (347.24,15.73) .. (362.74,50.63) ;
\draw [shift={(363.5,52.25)}, rotate = 249.33] [color={rgb, 255:red, 208; green, 2; blue, 27 }  ,draw opacity=1 ][line width=0.75]    (10.93,-3.29) .. controls (6.95,-1.4) and (3.31,-0.3) .. (0,0) .. controls (3.31,0.3) and (6.95,1.4) .. (10.93,3.29)   ;
\draw [color={rgb, 255:red, 208; green, 2; blue, 27 }  ,draw opacity=1 ]   (348.5,188.75) -- (319,188.75) ;
\draw [shift={(317,188.75)}, rotate = 360] [color={rgb, 255:red, 208; green, 2; blue, 27 }  ,draw opacity=1 ][line width=0.75]    (10.93,-3.29) .. controls (6.95,-1.4) and (3.31,-0.3) .. (0,0) .. controls (3.31,0.3) and (6.95,1.4) .. (10.93,3.29)   ;

\draw [color={rgb, 255:red, 126; green, 211; blue, 33 }  ,draw opacity=1 ] [dash pattern={on 4.5pt off 4.5pt}]  (385.17,219.42) .. controls (397.42,245.88) and (352.04,246.89) .. (361.99,220.56) ;
\draw [shift={(362.67,218.92)}, rotate = 102.09] [color={rgb, 255:red, 126; green, 211; blue, 33 }  ,draw opacity=1 ][line width=0.75]    (10.93,-3.29) .. controls (6.95,-1.4) and (3.31,-0.3) .. (0,0) .. controls (3.31,0.3) and (6.95,1.4) .. (10.93,3.29)   ;
\draw [color={rgb, 255:red, 126; green, 211; blue, 33 }  ,draw opacity=1 ] [dash pattern={on 4.5pt off 4.5pt}]  (317,202.83) -- (347,202.83) ;
\draw [shift={(349,202.83)}, rotate = 180] [color={rgb, 255:red, 126; green, 211; blue, 33 }  ,draw opacity=1 ][line width=0.75]    (10.93,-3.29) .. controls (6.95,-1.4) and (3.31,-0.3) .. (0,0) .. controls (3.31,0.3) and (6.95,1.4) .. (10.93,3.29)   ;
\draw [color={rgb, 255:red, 126; green, 211; blue, 33 }  ,draw opacity=1 ] [dash pattern={on 4.5pt off 4.5pt}]  (265.67,192.17) -- (221.34,163.26) ;
\draw [shift={(219.67,162.17)}, rotate = 33.11] [color={rgb, 255:red, 126; green, 211; blue, 33 }  ,draw opacity=1 ][line width=0.75]    (10.93,-3.29) .. controls (6.95,-1.4) and (3.31,-0.3) .. (0,0) .. controls (3.31,0.3) and (6.95,1.4) .. (10.93,3.29)   ;
\draw [color={rgb, 255:red, 126; green, 211; blue, 33 }  ,draw opacity=1 ] [dash pattern={on 4.5pt off 4.5pt}]  (307.67,96.83) -- (348.26,120.01) ;
\draw [shift={(350,121)}, rotate = 209.72] [color={rgb, 255:red, 126; green, 211; blue, 33 }  ,draw opacity=1 ][line width=0.75]    (10.93,-3.29) .. controls (6.95,-1.4) and (3.31,-0.3) .. (0,0) .. controls (3.31,0.3) and (6.95,1.4) .. (10.93,3.29)   ;
\draw [color={rgb, 255:red, 126; green, 211; blue, 33 }  ,draw opacity=1 ] [dash pattern={on 4.5pt off 4.5pt}]  (348.33,80.17) -- (320.33,80.17) ;
\draw [shift={(318.33,80.17)}, rotate = 360] [color={rgb, 255:red, 126; green, 211; blue, 33 }  ,draw opacity=1 ][line width=0.75]    (10.93,-3.29) .. controls (6.95,-1.4) and (3.31,-0.3) .. (0,0) .. controls (3.31,0.3) and (6.95,1.4) .. (10.93,3.29)   ;
\draw [color={rgb, 255:red, 126; green, 211; blue, 33 }  ,draw opacity=1 ] [dash pattern={on 4.5pt off 4.5pt}]  (315,148.17) -- (350.8,178.21) ;
\draw [shift={(352.33,179.5)}, rotate = 220.01] [color={rgb, 255:red, 126; green, 211; blue, 33 }  ,draw opacity=1 ][line width=0.75]    (10.93,-3.29) .. controls (6.95,-1.4) and (3.31,-0.3) .. (0,0) .. controls (3.31,0.3) and (6.95,1.4) .. (10.93,3.29)   ;
\draw [color={rgb, 255:red, 126; green, 211; blue, 33 }  ,draw opacity=1 ] [dash pattern={on 4.5pt off 4.5pt}]  (238.33,138.17) -- (267,138.17) ;
\draw [shift={(269,138.17)}, rotate = 180] [color={rgb, 255:red, 126; green, 211; blue, 33 }  ,draw opacity=1 ][line width=0.75]    (10.93,-3.29) .. controls (6.95,-1.4) and (3.31,-0.3) .. (0,0) .. controls (3.31,0.3) and (6.95,1.4) .. (10.93,3.29)   ;
\draw [color={rgb, 255:red, 126; green, 211; blue, 33 }  ,draw opacity=1 ] [dash pattern={on 4.5pt off 4.5pt}]  (271,92.17) -- (235.27,118.97) ;
\draw [shift={(233.67,120.17)}, rotate = 323.13] [color={rgb, 255:red, 126; green, 211; blue, 33 }  ,draw opacity=1 ][line width=0.75]    (10.93,-3.29) .. controls (6.95,-1.4) and (3.31,-0.3) .. (0,0) .. controls (3.31,0.3) and (6.95,1.4) .. (10.93,3.29)   ;
\draw [color={rgb, 255:red, 126; green, 211; blue, 33 }  ,draw opacity=1 ] [dash pattern={on 4.5pt off 4.5pt}]  (439.67,158.83) -- (400.56,190.25) ;
\draw [shift={(399,191.5)}, rotate = 321.23] [color={rgb, 255:red, 126; green, 211; blue, 33 }  ,draw opacity=1 ][line width=0.75]    (10.93,-3.29) .. controls (6.95,-1.4) and (3.31,-0.3) .. (0,0) .. controls (3.31,0.3) and (6.95,1.4) .. (10.93,3.29)   ;
\draw [color={rgb, 255:red, 126; green, 211; blue, 33 }  ,draw opacity=1 ] [dash pattern={on 4.5pt off 4.5pt}]  (397.67,141.5) -- (425.67,141.5) ;
\draw [shift={(427.67,141.5)}, rotate = 180] [color={rgb, 255:red, 126; green, 211; blue, 33 }  ,draw opacity=1 ][line width=0.75]    (10.93,-3.29) .. controls (6.95,-1.4) and (3.31,-0.3) .. (0,0) .. controls (3.31,0.3) and (6.95,1.4) .. (10.93,3.29)   ;
\draw [color={rgb, 255:red, 208; green, 2; blue, 27 }  ,draw opacity=1 ]   (357.67,114.83) -- (315.74,90.91) ;
\draw [shift={(314,89.92)}, rotate = 29.71] [color={rgb, 255:red, 208; green, 2; blue, 27 }  ,draw opacity=1 ][line width=0.75]    (10.93,-3.29) .. controls (6.95,-1.4) and (3.31,-0.3) .. (0,0) .. controls (3.31,0.3) and (6.95,1.4) .. (10.93,3.29)   ;
\draw [color={rgb, 255:red, 208; green, 2; blue, 27 }  ,draw opacity=1 ]   (349.67,147.5) -- (311.6,176.05) ;
\draw [shift={(310,177.25)}, rotate = 323.13] [color={rgb, 255:red, 208; green, 2; blue, 27 }  ,draw opacity=1 ][line width=0.75]    (10.93,-3.29) .. controls (6.95,-1.4) and (3.31,-0.3) .. (0,0) .. controls (3.31,0.3) and (6.95,1.4) .. (10.93,3.29)   ;
\draw [color={rgb, 255:red, 126; green, 211; blue, 33 }  ,draw opacity=1 ] [dash pattern={on 4.5pt off 4.5pt}]  (394.33,90.83) -- (430.74,118.3) ;
\draw [shift={(432.33,119.5)}, rotate = 217.03] [color={rgb, 255:red, 126; green, 211; blue, 33 }  ,draw opacity=1 ][line width=0.75]    (10.93,-3.29) .. controls (6.95,-1.4) and (3.31,-0.3) .. (0,0) .. controls (3.31,0.3) and (6.95,1.4) .. (10.93,3.29)   ;
\draw [color={rgb, 255:red, 208; green, 2; blue, 27 }  ,draw opacity=1 ]   (313.4,121.4) -- (348.71,89.93) ;
\draw [shift={(350.2,88.6)}, rotate = 138.29] [color={rgb, 255:red, 208; green, 2; blue, 27 }  ,draw opacity=1 ][line width=0.75]    (10.93,-3.29) .. controls (6.95,-1.4) and (3.31,-0.3) .. (0,0) .. controls (3.31,0.3) and (6.95,1.4) .. (10.93,3.29)   ;

\draw (200,130) node [anchor=north west][inner sep=0.75pt]   [align=left] {000};
\draw (280,70) node [anchor=north west][inner sep=0.75pt]   [align=left] {001};
\draw (280,130) node [anchor=north west][inner sep=0.75pt]   [align=left] {010};
\draw (280,190) node [anchor=north west][inner sep=0.75pt]   [align=left] {100};
\draw (360,70) node [anchor=north west][inner sep=0.75pt]   [align=left] {011};
\draw (360,130) node [anchor=north west][inner sep=0.75pt]   [align=left] {101};
\draw (360,190) node [anchor=north west][inner sep=0.75pt]   [align=left] {110};
\draw (440,130) node [anchor=north west][inner sep=0.75pt]   [align=left] {111};
\end{tikzpicture}
\end{figure}
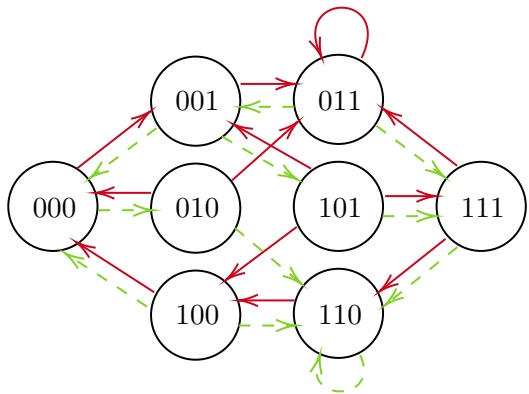

\vspace{-1cm}

The goal in the Antibiotics Time Machine Problem is to come up with a treatment plan that maximizes the probability of reaching the wild type starting from a given initial genotype with a predetermined number $(N)$ of antibiotic applications. We will consider two versions of the problem depending on the type of plans allowed: static and dynamic. The solution of the static version is a sequence of antibiotics determined at the beginning of the planning horizon. 
For the instance in Figure~\ref{fig:abr-image}, an example sequence is 1-2-2, which  gives the probability of reaching the wild type    $M_{111,011}^1 \cdot M_{011,001}^2 \cdot M_{001,000}^2 $ if the initial genotype is 111.
In the dynamic version, the solution is expressed by a policy, where we decide which antibiotic to apply based on the genotype at each decision epoch. Note that the dynamic version requires intermediate observations of state transitions.  
For the instance in Figure~\ref{fig:abr-image}, an example   policy would be to apply antibiotic $k  = 1 + g_1$ if the current state is $\bm{g}$, which gives the probability of reaching the wild type
$M_{111,011}^1 \cdot M_{011,001}^2 \cdot M_{001,000}^2 + M_{111,110}^1 \cdot M_{110,100}^1 \cdot M_{100,000}^1  $ if the initial genotype is 111.

In the rest of the paper, we will consider the stochastic version of the Antibiotics Time Machine Problem in which the matrices $ \bm{M^k}$ are uncertain.
In particular, we will assume that 
$ \bm{M^k}$ is a discrete random variable that is equally likely to take value from the set of all possible antibiotic matrices $\{\bm{M^{k,h}} \in \mathbb{R}^{d \times d}:   h \in \mathcal{S} \}$, where $\mathcal{S}$ is an index set (see Appendix~\ref{app:matrixConstruction} for details). In reality, the cardinality of $\mathcal{S}$ can be quite large (e.g., $12^{16}$, see, \citet{mesum2024}). Instead, we will  employ Sample Average Approximation (SAA) and take a random sample $\mathcal{H} \subseteq \mathcal{S}^{K}$ such that $|\mathcal{H}| \ll |\mathcal{S}|^{K}$. 

\section{Static Stochastic Problem}
\label{sec:ABRStatic}


In this section, we will assume that we are given a stochastic Antibiotics Time Machine Problem instance with $d=2^a$ many alleles, $K$ many antibiotics and a scenario sample $\mathcal{H}$.  
We   present how to formulate and solve the risk-neutral and risk-averse versions of this problem in Sections~\ref{sec:ABR-riskneutral-stoc} and~\ref{sec:ABR-riskaverse-stoc}, respectively. Then, we propose  a scenario decomposition algorithm in Section~\ref{sec:scen-decomp-static} and several algorithmic enhancements to increase its computational efficiency in Section~\ref{sec:improvements-static}.

\subsection{Risk-Neutral Version}
\label{sec:ABR-riskneutral-stoc}


In this version of the problem, we assume that the objective function maximizes the \textit{expected probability} of reaching the wild type in $N$ steps given an initial genotype. In order to formulate this problem, we first  introduce some notation: Let $\bm{r} \in \mathbb{R}^d$ and $\bm{q}\in \mathbb{R}^d$ denote the standard unit vectors which correspond to the initial distribution and the desired distribution, respectively (we will use a genotype, a binary vector in $\{0,1\}^a$, and its ID, a positive integer in $\{1,\dots,2^a\}$, interchangeably throughout the paper). 
        We define a binary decision variable  $x_{n,k}$ that takes the value 1 if antibiotic~$k$ is applied at step $n$, and 0 otherwise.
Then, the static risk-neutral optimization model can be formulated as  
\begin{equation} \label{eq:stochasticSampledNeutral}
\max_{\bm{x} \in \mathcal{X}}   \frac1{|\mathcal{H}|} \sum_{h\in \mathcal{H}} 
\underbrace{\bm{r}^\top \bigg[ \prod_{n=1}^N \bigg(\sum_{k=1}^K\bm{M^{k,h}} x_{n,k}\bigg)\bigg] \bm{q}}_{:=f_h (\bm{x})} ,
\end{equation} 
where the set
$ \mathcal{X} := \{ \bm{x} \in \{0,1\}^{N \times K} : \sum_{k=1}^K x_{n,k}=1, n=1,\dots,N \}$ ensures that exactly one antibiotic~$k$ is chosen for each position $n$.
The objective function is calculated for each scenario $h$ depending on the antibiotic selection by multiplying the associated probability transition matrices, and then,  averaged over $h \in \mathcal{H}$.
Notice that the formulation \eqref{eq:stochasticSampledNeutral} is a nonlinear integer programming problem, which seems 
 difficult to solve  directly. Instead, we will introduce  new variables, using which the problem is reformulated as an MILP.
For this purpose, let $\bm{u_n^h}$ denote the probability distribution  after the $n$-th antibiotic is applied under scenario $h$. Then, our decision variables satisfy the following recursion:
\begin{equation}
\label{eq:static-recursion}
     \sum_{k=1}^{K} \bm{(u_{n-1}^h)}^\top \bm{M^{k,h}}x_{n,k} = \bm{(u_n^h)}^\top \quad n=1,\dots,N .
\end{equation}

Adapting the deterministic formulation in~\citet{kocuk2022optimization,mesum2024} to this setting, it is possible to reformulate problem~\eqref{eq:stochasticSampledNeutral} exactly as the following MILP:
\begin{subequations} \label{eq:stochasticSampledDecompNeutralMILP}
\begin{align}
\textup{S-RN($\mathcal{H}$):\quad}
\max_{\bm{u}, \, \bm{v}, \, \bm{x} \in \mathcal{X} } \  &  \frac{1}{|\mathcal{H}|} ~ \sum_{h \in \mathcal{H}} \bm{q}^\top \bm{u_N^h} \label{eq:riskneutralMILP-obj} \\
\textup{s.t.}  \ &  \bm{u_0^{h}}=\bm{r} &h& \in \mathcal{H} \label{eq:riskneutralMILP-consb} \\
 \  & \sum_{k=1}^K \bm{v_{n-1,k}^h} = \bm{u_{n-1}^h} &n&=1,\dots,N, h \in \mathcal{H} \label{eq:riskneutralMILP-consc}   \\ 
 \  &  \sum_{k=1}^K (\bm{v^{h}_{n-1,k}})^\top \bm{M^{k,h}}  = (\bm{u^{h}_n})^\top   &n&=1,\dots,N, h \in \mathcal{H} \label{eq:riskneutralMILP-consd} \\ 
 \  &  \ \bm{e}^\top \bm{v_{n-1,k}^h} = x_{n,k} &n&=1,\dots,N,  k=1,\dots,K, h \in \mathcal{H} \label{eq:riskneutralMILP-conse} \\
 \ &   \bm{u_{n}^h} \in \mathbb{R}_+^d , \  \bm{v_{n-1,k}^h} \in \mathbb{R}_+^d   &n&=1,\dots,N,  k=1,\dots,K, h \in \mathcal{H}. \label{eq:riskneutralMILP-consf}
\end{align}
\end{subequations}
In this formulation, the objective function \eqref{eq:riskneutralMILP-obj} maximizes the expected probability of reaching the desired state. Constraint~\eqref{eq:riskneutralMILP-consb} ensures that the initial state distribution is $\bm{r}$. Constraints \eqref{eq:riskneutralMILP-consc}-\eqref{eq:riskneutralMILP-conse} linearize the recursive relation \eqref{eq:static-recursion} between each drug application. Here, variables $\bm{v_n^h}$ are the copy variables that help linearizing this relation and $\bm{e}$ denotes the vector of ones.  Finally, constraint~\eqref{eq:riskneutralMILP-consf} gives the variable domain restrictions.


\subsection{Risk-Averse Version}
\label{sec:ABR-riskaverse-stoc}

While solving the problem with a risk-neutral approach that solely focuses on the average performance seems plausible at first,  it is important for a treatment to be successful even for the worst cases due to the sensitive nature of the Antibiotics Time Machine Problem.
Therefore, solving the problem in a risk-averse manner would be a more reasonable approach. Our risk measure is CVaR, where we aim to maximize the average of the worst $\alpha$ fraction of the realizations.

To motivate the relevance of risk-aversion, we compare 
the solutions obtained from the risk-neutral and risk-averse approaches with the initial genotype 0001
under $N=5$.
In this case, the average performances of risk-neutral and risk-averse solutions are 0.55 and 0.52, respectively, which are relatively close. However, the average of the worst 10\% of the realizations is 0.00 and 0.31 for risk-neutral and risk-averse solutions, respectively, resulting in a stark difference.
To better compare the two approaches, we also provide 
the histograms corresponding to the frequency of certain probability ranges of reaching the wild type in Figure~\ref{fig:riskneutral vs riskaverse}.  
We clearly see that the risk-neutral solution has a large number of realizations with low probability of reaching the wild type, which is significantly reduced in the risk-averse solution.
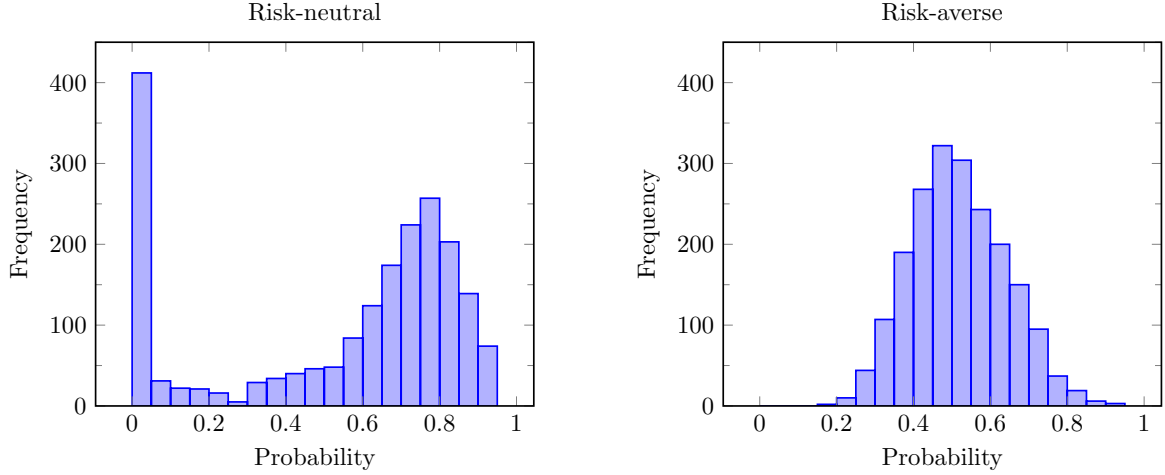
\begin{figure}[ht]
\caption{Comparison of risk-neutral and risk-averse solutions for the initial genotype 0001 and $N=5$.}
\label{fig:riskneutral vs riskaverse}
\begin{subfigure}{.5\textwidth}
  \centering
  \begin{tikzpicture}[scale=0.85]
\begin{axis}[
    title={Risk-neutral},
    ymin=0, ymax=450,
    minor y tick num = 1,
    area style,
    xlabel={Probability},
    ylabel={Frequency},
    xtick={0,0.2,0.4,0.6,0.8,1.0},
    ]
\addplot+[ybar interval,mark=no] plot coordinates { 
( 0.0 , 412 )
( 0.05 , 31 )
( 0.1 , 22 )
( 0.15 , 21 )
( 0.2 , 16 )
( 0.25 , 5 )
( 0.3 , 29 )
( 0.35 , 34 )
( 0.4 , 40 )
( 0.45 , 46 )
( 0.5 , 48 )
( 0.55 , 84 )
( 0.6 , 124 )
( 0.65 , 174 )
( 0.7 , 224 )
( 0.75 , 257 )
( 0.8 , 203 )
( 0.85 , 139 )
( 0.9 , 74 )
( 0.95 , 17 )
};
\end{axis}
\end{tikzpicture}
%
\end{subfigure}
\begin{subfigure}{.5\textwidth}
  \centering
\begin{tikzpicture}[scale=0.85]
\begin{axis}[
    title={Risk-averse},
    ymin=0, ymax=450,
    minor y tick num = 1,
    area style,
    xlabel={Probability},
    ylabel={Frequency},
    xtick={0,0.2,0.4,0.6,0.8,1.0},
    ]
\addplot+[ybar interval,mark=no] plot coordinates { 
( 0.0 , 0 )
( 0.05 , 0 )
( 0.1 , 0 )
( 0.15 , 2 )
( 0.2 , 10 )
( 0.25 , 44 )
( 0.3 , 107 )
( 0.35 , 190 )
( 0.4 , 268 )
( 0.45 , 322 )
( 0.5 , 304 )
( 0.55 , 243 )
( 0.6 , 200 )
( 0.65 , 150 )
( 0.7 , 95 )
( 0.75 , 37 )
( 0.8 , 19 )
( 0.85 , 6 )
( 0.9 , 3 )
( 0.95 , 0 )
};
\end{axis}
\end{tikzpicture}
%
\end{subfigure}
\end{figure}

Having motivated the risk-averse approach, let us formally define our risk measure CVaR: 
Let $\bm{\upsilon}\in\mathbb{R}^{|\mathcal{H}|}$ and $\alpha\in(0,1]$ such that $\alpha |\mathcal{H}| \in \mathbb{Z}_+$. Then, we define $s_\alpha ( \bm{\upsilon})$ as the average of the smallest $\alpha |\mathcal{H}|$ entries of $\bm{\upsilon}$. 
We are now ready to provide the risk-averse formulation of the problem:
\begin{equation} \label{eq:stochasticSampledAverse}
\max_{ \bm{x} \in \mathcal{X}}  s_\alpha \left( [ f_h (\bm{x})]_{h\in \mathcal{H}} \right) 
\end{equation}
Although $  s_\alpha(\cdot)$ is a nonlinear function, it has a  polyhedral representation as given in Proposition~\ref{prop:sConcavePoly}.
\begin{proposition}\label{prop:sConcavePoly}
\citep{nemirovskiLO} Consider the function $s_\alpha (\bm{\upsilon})$ defined above. Then, 
    \begin{equation*}
        s_\alpha (\bm{\upsilon}) =  \max_{\lambda, \mu} \left\{  \lambda-\frac{1}{\alpha |\mathcal{H}|}\sum_{h \in \mathcal{H}} \mu_h \ : \  \lambda - \mu_h \leq \upsilon_h, \mu_h \geq 0 \ h \in \mathcal{H}  \right\} .
    \end{equation*}    
\end{proposition}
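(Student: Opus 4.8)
The plan is to prove the identity by linear programming duality. I would write the right-hand side as a linear program in $(\lambda, \bm{\mu})$ and dualize it; the dual turns out to be a transparent combinatorial optimization whose value is manifestly the average of the $\alpha|\mathcal{H}|$ smallest entries of $\bm{\upsilon}$. Throughout let $m := \alpha|\mathcal{H}|$, which is a positive integer by assumption.

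First I would form the dual of the maximization on the right. Attaching a multiplier $p_h \geq 0$ to each constraint $\lambda - \mu_h \leq \upsilon_h$, treating $\lambda$ as a free variable and each $\mu_h$ as sign-restricted, the standard max-to-min duality for linear programs yields
\begin{equation*}
\min_{\bm p} \left\{ \sum_{h \in \mathcal{H}} \upsilon_h p_h : \sum_{h \in \mathcal{H}} p_h = 1, \ 0 \leq p_h \leq \tfrac{1}{m}, \ h \in \mathcal{H} \right\}.
\end{equation*}
The equality $\sum_{h} p_h = 1$ comes from the free variable $\lambda$ (whose objective coefficient is $1$), while the cap $p_h \leq 1/m$ comes from the $\mu_h$ columns together with the coefficient $-1/m$ of $\mu_h$ in the objective. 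The feasible region is a nonempty bounded polytope (the capped simplex), so both programs are feasible and bounded and strong duality holds; hence the right-hand side equals the optimal value of this dual.

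Next I would solve the dual explicitly. Minimizing the linear functional $\sum_{h} \upsilon_h p_h$ over the capped simplex is a fractional-knapsack-type problem: to make the objective as small as possible one loads the maximal allowed mass $1/m$ onto the coordinates with the smallest $\upsilon_h$. Since each $p_h$ is capped at $1/m$ and the masses sum to $1$, an optimal solution places weight exactly $1/m$ on $m$ of the smallest entries and $0$ elsewhere. Sorting $\upsilon_{(1)} \leq \cdots \leq \upsilon_{(|\mathcal{H}|)}$, this value is $\frac{1}{m}\sum_{i=1}^{m} \upsilon_{(i)}$, which is exactly $s_\alpha(\bm{\upsilon})$ by definition. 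Combining this with strong duality finishes the proof.

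The main obstacle is not deep but bookkeeping: I must orient the dual correctly (which primal ingredient produces the cap $1/m$ versus the normalization, and that $\lambda$ free forces an equality), and I must argue rigorously that loading the $m$ smallest coordinates is optimal, handling ties in $\bm{\upsilon}$ so that ``the smallest $m$ entries'' is well defined even when order statistics coincide. As an alternative that avoids duality, I could fix $\lambda$, note the inner optimum is $\mu_h = \max\{\lambda - \upsilon_h, 0\}$, and maximize the resulting concave piecewise-linear function $\lambda - \frac{1}{m}\sum_{h}\max\{\lambda - \upsilon_h, 0\}$ in the single variable $\lambda$; its one-sided derivatives change sign at $\lambda = \upsilon_{(m)}$, so the maximum is attained there and again equals $\frac{1}{m}\sum_{i=1}^{m}\upsilon_{(i)}$. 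I would present the duality argument as the primary proof since it is coordinate-free and generalizes cleanly.
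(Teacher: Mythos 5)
Your proposal is correct. Note, however, that the paper does not actually prove this proposition: it states it as a known result and defers entirely to the cited reference (\citealp{nemirovskiLO}), so there is no in-paper argument to compare against step by step. What you supply is the standard self-contained derivation of the linear-programming representation of CVaR, and both of your routes check out. The duality route is sound: dualizing the constraints $\lambda - \mu_h \le \upsilon_h$ with multipliers $p_h \ge 0$, the free variable $\lambda$ yields $\sum_h p_h = 1$ and the column of $\mu_h$ yields $p_h \le 1/m$ with $m = \alpha|\mathcal{H}|$, so the dual is minimization of $\sum_h \upsilon_h p_h$ over the capped simplex; since $m$ is an integer, the extreme points of that polytope put mass $1/m$ on exactly $m$ coordinates, and the minimum is the average of the $m$ smallest entries, i.e.\ $s_\alpha(\bm{\upsilon})$. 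Strong duality applies because both programs are feasible (e.g.\ $\lambda$ very negative with $\bm{\mu}=0$ primally, and $p_h = 1/|\mathcal{H}| \le 1/m$ dually). Your alternative one-dimensional argument — eliminating $\mu_h = \max\{\lambda - \upsilon_h, 0\}$ and maximizing the concave piecewise-linear function of $\lambda$ at $\lambda = \upsilon_{(m)}$ — is the Rockafellar--Uryasev style proof and is equally valid, with ties handled correctly since the terms with $\upsilon_{(i)} = \upsilon_{(m)}$ contribute zero. Either version would serve as a complete proof where the paper offers only a citation; the duality version is arguably preferable here because it makes the ``capped simplex'' interpretation explicit, which is also the lens through which the scenario-regrouping discussion in Section~3.4.3 is easiest to understand.
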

Combining formulation~\eqref{eq:stochasticSampledDecompNeutralMILP} and Proposition~\ref{prop:sConcavePoly}, we provide an MILP formulation for the risk-averse version of the problem as below:
\begin{equation*} \label{eq:stochasticSampledDecompAverseMILP}
\textup{S-RA}_\alpha(\mathcal{H}):\quad
\max_{\bm{u}, \, \bm{v}, \, \bm{x} \in \mathcal{X}, \lambda, \mu } \    \left\{ \lambda - \frac{1}{\alpha |\mathcal{H}|} \sum_{h \in \mathcal{H}} \mu_h :   \lambda - \mu_h \leq \bm{q}^\top \bm{u_N^h} \ h  \in \mathcal{H} ,  \mu_h \geq 0 \ h  \in \mathcal{H}  ,  \eqref{eq:riskneutralMILP-consb} - \eqref{eq:riskneutralMILP-consf} \right\}. 
\end{equation*}
Notice that problem $\textup{S-RA}_1 (\mathcal{H})$ is equivalent to $\textup{S-RN}(\mathcal{H})$, therefore, the risk-neutral formulation is a special case of the risk-averse formulation. Therefore, in the remainder of the paper, we will mainly analyze the risk-averse formulation.

\subsection{Scenario Decomposition Algorithm}
\label{sec:scen-decomp-static}

It is challenging to solve  the formulations introduced above directly  using an off-the-shelf MILP solver even for moderate values of $|\mathcal{H}|$. Therefore, we resort to the scenario decomposition algorithm, originally proposed for risk-neutral problems in \citet{ahmed2013decomposition}. 
Our decomposition approach equipartitions the scenario set $\mathcal{H}$ into subsets $\{\mathcal{H}^p : p =1,\dots, P\}$ such that $\alpha |\mathcal{H}^p| \in \mathbb{Z}_+$, and solves  \textit{scenario batch subproblems}   as $\textup{S-RA}_\alpha(\mathcal{H}^p)$. 
The critical component of the scenario decomposition algorithm is to derive a lower bound (LB) and an upper bound (UB) from the scenario batch subproblems. The  result below provides these bounds for the risk-averse setting.


\begin{proposition}\label{prop:stochasticSampledDecompAverse}
Let $\alpha \in (0,1]$. Denote the optimal value of problem $\textup{S-RA}_\alpha(\mathcal{H})$ as $\hat z_\mathcal{H}$ and $\bm{x^{(p)}} = \argmax_{\bm{x} \in \mathcal{X}}  s_\alpha \left( [ f_h (\bm{x})]_{h\in \mathcal{H}^p} \right)$.
Then, 
\[
\underbrace{\max_{p  =1,\dots,  P} \left\{ s_\alpha \left( [ f_h (\bm{x^{(p)}})]_{h\in \mathcal{H}} \right) \right\}}_{LB}
\le 
\hat z_{\mathcal{H}} 
\le
\underbrace{\frac{1}{P} \sum_{p=1}^P  s_\alpha \left( [ f_h (\bm{x^{(p)}})]_{h\in \mathcal{H}^p} \right).}_{UB}
\]
\end{proposition}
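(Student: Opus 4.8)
The plan is to prove the two inequalities separately. The lower bound follows from a pure feasibility argument, while the upper bound rests on a decomposition property of $s_\alpha$ across the equipartition, combined with the defining optimality of each subproblem solution. Throughout I will use the polyhedral representation of $s_\alpha$ from Proposition~\ref{prop:sConcavePoly} and the fact that the partition is equal-sized, so that $|\mathcal{H}^p| = |\mathcal{H}|/P$ and hence $\alpha|\mathcal{H}| = P\,\alpha|\mathcal{H}^p|$ for every $p$.

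For the lower bound, I would observe that each subproblem optimizer $\bm{x^{(p)}}$ belongs to $\mathcal{X}$ and is therefore a feasible solution of the full problem $\textup{S-RA}_\alpha(\mathcal{H})$. Since $\hat z_\mathcal{H}$ is the maximum of $s_\alpha([f_h(\bm{x})]_{h\in\mathcal{H}})$ over all $\bm{x}\in\mathcal{X}$, we obtain $s_\alpha([f_h(\bm{x^{(p)}})]_{h\in\mathcal{H}}) \le \hat z_\mathcal{H}$ for every $p$, and taking the maximum over $p$ gives $LB \le \hat z_\mathcal{H}$.

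For the upper bound, the crucial step is the decomposition inequality, valid for any fixed $\bm{x}\in\mathcal{X}$:
\[
s_\alpha\!\left([f_h(\bm{x})]_{h\in\mathcal{H}}\right) \le \frac{1}{P}\sum_{p=1}^{P} s_\alpha\!\left([f_h(\bm{x})]_{h\in\mathcal{H}^p}\right).
\]
To establish it, I would take an optimal pair $(\lambda^\star,\mu^\star)$ attaining the representation of $s_\alpha([f_h(\bm{x})]_{h\in\mathcal{H}})$ in Proposition~\ref{prop:sConcavePoly}, split the sum $\sum_{h\in\mathcal{H}}\mu_h^\star$ along the partition, and use $\alpha|\mathcal{H}| = P\,\alpha|\mathcal{H}^p|$ to rewrite the optimal value as the average over $p$ of the quantities $\lambda^\star - \frac{1}{\alpha|\mathcal{H}^p|}\sum_{h\in\mathcal{H}^p}\mu_h^\star$. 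For each $p$, the restricted pair $\big(\lambda^\star,(\mu_h^\star)_{h\in\mathcal{H}^p}\big)$ satisfies the constraints $\lambda^\star-\mu_h^\star\le f_h(\bm{x})$ and $\mu_h^\star\ge 0$, hence it is feasible for the inner maximization defining $s_\alpha([f_h(\bm{x})]_{h\in\mathcal{H}^p})$ and its value is a lower bound for that maximum. Averaging over $p$ yields the claimed inequality. I would then apply it at an optimizer $\hat{\bm{x}}$ of the full problem, so that $\hat z_\mathcal{H} = s_\alpha([f_h(\hat{\bm{x}})]_{h\in\mathcal{H}}) \le \frac{1}{P}\sum_p s_\alpha([f_h(\hat{\bm{x}})]_{h\in\mathcal{H}^p})$, and finish by termwise subproblem optimality: since $\bm{x^{(p)}}$ maximizes $s_\alpha([f_h(\cdot)]_{h\in\mathcal{H}^p})$ over $\mathcal{X}$ and $\hat{\bm{x}}\in\mathcal{X}$, each summand satisfies $s_\alpha([f_h(\hat{\bm{x}})]_{h\in\mathcal{H}^p}) \le s_\alpha([f_h(\bm{x^{(p)}})]_{h\in\mathcal{H}^p})$, giving $\hat z_\mathcal{H}\le UB$.

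The main obstacle is the decomposition inequality, and specifically the legitimacy of reusing one common multiplier $\lambda^\star$ across all subproblems. This is exactly where the equipartition assumption enters: only when the weights $1/(\alpha|\mathcal{H}|)$ and $1/(\alpha|\mathcal{H}^p|)$ are related by the clean factor $P$ does the single-$\lambda^\star$ value of the full problem split into a genuine average of per-subproblem feasible values; for unequal block sizes this convex-combination structure would break. The remaining pieces—feasibility of each $\bm{x^{(p)}}$ for the lower bound and termwise subproblem optimality for the upper bound—are routine.
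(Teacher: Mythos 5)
Your proof is correct and follows essentially the same route as the paper: feasibility of each $\bm{x^{(p)}}$ in the full problem gives the lower bound, and feasibility of the full-problem optimizer in each batch subproblem, combined with the averaging inequality $s_\alpha([f_h(\bm{x})]_{h\in\mathcal{H}}) \le \frac{1}{P}\sum_{p} s_\alpha([f_h(\bm{x})]_{h\in\mathcal{H}^p})$, gives the upper bound. The one place you go beyond the paper is in actually justifying that averaging inequality via the polyhedral representation of Proposition~\ref{prop:sConcavePoly} and the equipartition identity $\alpha|\mathcal{H}| = P\,\alpha|\mathcal{H}^p|$; the paper asserts this step without proof, so your more careful treatment is a welcome (and correct) addition rather than a deviation.
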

\begin{proof}[Proof.]
We first prove the lower bound as follows:
Let $p  =1,\dots,  P$. Since the   solution $\bm{x^{(p)}}$ obtained from a batch subproblem $\textup{S-RA}_\alpha(\mathcal{H}^p)$ can be extended with appropriate  $\bm{u}$ and $\bm{v}$ values such that it is feasible for  $\textup{S-RA}_\alpha(\mathcal{H})$,  we deduce that $s_\alpha \left( [ f_h (\bm{x^{(p)}})]_{h\in \mathcal{H}} \right)$ is a lower bound on $\hat z_{\mathcal{H}}$.  Consequently, the best lower bound is obtained by taking the maximum over $p  =1,\dots,  P$.

We next prove the upper bound as follows: 
Let $\bm{x^*}$ be an optimal solution for $\textup{S-RA}_\alpha(\mathcal{H})$. Note that $\bm{x^*}$ is also feasible for   batch subproblem $\textup{S-RA}_\alpha(\mathcal{H}^p)$, $p  =1,\dots,  P$ when extended with appropriate  $\bm{u}$ and $\bm{v}$ values. Since $\bm{x^{(p)}}$ is an  optimal solution for $\textup{S-RA}_\alpha(\mathcal{H}^p)$, the relation \(s_\alpha \left( [ f_h (\bm{x^*})]_{h\in \mathcal{H}^p} \right) \leq s_\alpha \left( [ f_h (\bm{x^{(p)}})]_{h\in \mathcal{H}^p} \right)\)   holds true.
 The result follows since $\hat z_\mathcal{H} =  s_\alpha \left ( [ f_h (\bm{x^*})]_{h\in \mathcal{H}} \right) \le  \frac{1}{P} \sum_{p =1}^P  s_\alpha \left( [ f_h (\bm{x^*})]_{h\in \mathcal{H}^p} \right) $.
\hfill\Halmos
\end{proof}



We are now able to present our static scenario decomposition scheme in Algorithm \ref{alg:stochasticSampledDecompAverse}. 
We note that since our approach is based on SAA, upon termination with a solution $\bm{x^*}$   obtained from the ``in-sample optimization", we carry out an ``out-of-sample" evaluation using a fresh sample scenario set $\mathcal{H}'$. 
\begin{algorithm}[H]
\caption{Static Scenario Decomposition Algorithm.}
\label{alg:stochasticSampledDecompAverse}
\begin{algorithmic}
\STATE Set $t=1$, $\bm{x^*}=0$, $LB=0$ and $UB=1$.
\WHILE{$UB - LB > \epsilon$ and  $t \le \tau$ }
\STATE Solve $\textup{S-RA}_\alpha(\mathcal{H}^p)$ in parallel for   $p =1,\dots,  P$.  
\STATE If needed, update  $LB$ and $UB$  as in Proposition~\ref{prop:stochasticSampledDecompAverse} and the best feasible solution $\bm{x^*}$. 
\STATE Add the following no-good cut to the set~$\mathcal{X}$ for each distinct solution $\bm{\bm{x^{(p)}}}$ obtained:
\begin{equation}\label{eq:noGoodBase}
\sum_{ (n,k) : x_{n,k}^{(p)}=1} x_{n,k} \le |N|-1 .
\end{equation}
\STATE Increment $t=t+1$.
\ENDWHILE
\STATE Carry out an ``out-of-sample" analysis with a new sample $\mathcal{H}'$ by computing $s_\alpha \left( [ f_h (\bm{x^*})]_{h\in \mathcal{H}'}\right)$.
\end{algorithmic}
\end{algorithm}

\subsection{Enhancements}
\label{sec:improvements-static}

As the vanilla scenario decomposition algorithm (Algorithm \ref{alg:stochasticSampledDecompAverse}) is quite slow,   we introduce several enhancements to improve its computational efficiency in this section.


\subsubsection{Cartesian Cuts} \label{subsec:clusteringEnhancement}

By construction, each no-good cut~\eqref{eq:noGoodBase} eliminates exactly one solution in Algorithm~\ref{alg:stochasticSampledDecompAverse}. The main idea behind the Cartesian Cuts is to determine identify ``similar" solutions and eliminate them at the same time using a single inequality.

To be more precise, suppose that we solve scenario batch subproblems $\textup{S-RA}_\alpha(\mathcal{H}^p)$  each $p =1,\dots,  P$  and   obtain a set of solutions $\mathcal{T} := \{ \bm{x^{(p)}} : p  =1,\dots,  P\}$. 
Then,  using the K-means algorithm, we cluster these solutions into the clusters $\mathcal{T}_c$ for some $c\in\mathcal{C}$. For each cluster $c\in\mathcal{C}$, we obtain the sets   $\mathcal{A}_{c,n}= \{ k : x_{n,k} = 1 \text{ for some } \bm{x} \in \mathcal{T}_c \}$ and $\mathcal{K}_c = \mathcal{A}_{c,1} \times \dots \times \mathcal{A}_{c,N} $. Then, we add the following no-good cut:
\begin{equation} \label{eq:clusterNoGoodCut}
    \sum_{n=1}^N \sum_{k \in \mathcal{A}_{c,n}} x_{n,k} \leq N-1 \quad c\in\mathcal{C}.
\end{equation}
However, due to the Cartesian product operation, this inequality may cut solutions that do not come from the optimal solutions of scenario subproblems. Therefore, it is not valid unless the objective values of the solutions in the sets $\mathcal{K}_c \setminus \mathcal{T}_c $ are computed and LB is updated, if needed.

To summarize, the Cartesian Cuts~\eqref{eq:clusterNoGoodCut} have the advantage of eliminating more solutions per inequality (and, hence, per iteration) compared to the standard no-good cut~\eqref{eq:noGoodBase}. Therefore, they may have a positive effect on both LB and UB progress in Algorithm~\ref{alg:stochasticSampledDecompAverse}, and have the potential of reducing the number of iterations needed for convergence. On the other hand, the implementation of Cartesian Cuts has an overhead due to the computation of $s_\alpha \left( [ f_h (\bm{x})]_{h\in \mathcal{H}} \right)$ for every $\bm{x}\in\mathcal{K}_c \setminus \mathcal{T}_c$.

\subsubsection{Symmetry Breaking Inequalities and Symmetry-Enhanced Cartesian Cuts} \label{subsec:symmetryNoGood}

Interestingly, ``no in-take" action may be a part of an optimal solution for the Antibiotics Time Machine Problem (see, e.g. Figure~5a in~\citet{mesum2024}), meaning that we may opt for a shorter treatment plan than the one allowed with $N$.
This action can be modeled by adding a $d\times d$ identity matrix, denoted by $\bm{I}$, to the set of antibiotic matrices (we will refer to the index of this matrix by $I$). 
However, since the identity matrix is commutative with every matrix (e.g., applying antibiotics in the order  1-$I$-1 or 1-1-$I$ have the same effect), this results in symmetry in the problem, a known issue for the MILP solvers.   
In order to resolve this issue, we add the following symmetry breaking constraints:
\begin{equation}
\label{eq:sym-break}
    x_{1,I} = 0 \text{ and } x_{n,I} \leq x_{n+1,I} \quad  n = 1, \dots N-1.
\end{equation}

Once inequalities~\eqref{eq:sym-break} are considered, it is possible to further strengthen the Cartesian cuts~\eqref{eq:clusterNoGoodCut} introduced before as given in the following proposition:

\begin{proposition}\label{prop:symEnhancedCartCut}
Consider a cluster $\mathcal{T}_c$ of solutions as described before.
Let $N^*$ denote the first position that the identity matrix  is present  in $\mathcal{T}_c$, that is,
\[  N^* =    \begin{cases}
    n, & \text{ if }  I \notin \mathcal{A}_{c,n-1}, I \in \mathcal{A}_{c,n}  \\
    N/A,& \text{ otherwise.}
            \end{cases} \]
Let $N^I$ denote the first position for which the identity matrix  is the singleton in the sets $\mathcal{A}_{c,1},\dots, \mathcal{A}_{c,N}$, that is,
\[
    N^I =  \begin{cases}
    n, & \text{ if }   \{I\} \neq \mathcal{A}_{c,n-1}, \{I\} = \mathcal{A}_{c,n},   \\
    N/A,& \text{ otherwise.}
            \end{cases} 
\]
Consider the set $\mathcal{X}' = \{ \bm{x}\in\mathcal{X} : \eqref{eq:clusterNoGoodCut}, \eqref{eq:sym-break}\}$ and the following set definitions:
\begin{equation*}
\mathcal{\tilde{A}}^{\bar{N}}_{c,n} :=
\begin{cases}
     \mathcal{A}_{c,n}^{N} &\text{if } n \in \{1, N^* -1, \dots , \bar{N} -1\} \\
    \mathcal{A}_{c,n}^{N} \cup \{I\} &\text{if } n \in \{2,\dots,N^* -2\} \\
    \{I\} &\text{if } n = \bar{N}
\end{cases}
,
\mathcal{\tilde{A}}^{N}_{c,n} :=
\begin{cases}
     \mathcal{A}_{c,n} &\text{if } n \in \{1, N^* -1, \dots , N\} \\
    \mathcal{A}_{c,n}^{N} \cup \{I\} &\text{if } n \in \{2,\dots,N^* -2\}
\end{cases}.
\end{equation*}
Then, we have the following:
%
\begin{enumerate}[label=(\roman*)]
\item If $N^*$ is defined but $N^I$ is not defined, then the following inequalities are valid for $\mathcal{X}'$:
\begin{subequations}\label{eq:case1}
\begin{align}
\sum_{n=1}^{\bar{N}-1} \sum_{k \in \mathcal{\tilde{A}}_{c,n}^{\bar{N}}} x_{n,k} & \leq \bar{N}-1, \ \bar{N} = N^*,\dots,N-1 \label{eq:case1a}\\
\sum_{n=1}^{{N}} \sum_{k \in \mathcal{\tilde{A}}_{c,n}^{{N}}} x_{n,k} & \leq {N}-1 . \label{eq:case1b}
\end{align}
\end{subequations}



\item If both $N^*$ and $N^I$ are defined, then the following inequalities are valid for $\mathcal{X}'$:
\begin{equation}\label{eq:case2}
\sum_{n=1}^{\bar{N}-1} \sum_{k \in \mathcal{\tilde{A}}_{c,n}^{\bar{N}}} x_{n,k}  \leq \bar{N}-1, \ \bar{N} = N^*,\dots,N^{I}. 
\end{equation}
\end{enumerate}
\end{proposition}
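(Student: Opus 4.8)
The plan is to read each of \eqref{eq:case1} and \eqref{eq:case2} as a covering (no-good) inequality of the generic shape $\sum_{n}\sum_{k\in S_n}x_{n,k}\le (\text{number of positions summed})-1$. Because every $\bm{x}\in\mathcal{X}$ is an SOS1 vector (exactly one $x_{n,k}=1$ per position $n$), each term $\sum_{k\in S_n}x_{n,k}$ is $0$ or $1$, so such an inequality is \emph{violated} precisely when the selected antibiotic lies in $S_n$ at every summed position simultaneously. Hence validity for $\mathcal{X}'$ is equivalent to: no $\bm{x}\in\mathcal{X}'$ selects within all the prescribed $\mathcal{\tilde{A}}$-sets at once. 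I would prove this by contradiction, assuming such an $\bm{x}$ exists and showing it is forced to select within the \emph{original} sets $\mathcal{A}_{c,n}$ at all $N$ positions, i.e.\ to realize a member of $\mathcal{K}_c$, thereby violating the Cartesian cut~\eqref{eq:clusterNoGoodCut} that $\mathcal{X}'$ already enforces.

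First I would record the structural consequences of the symmetry-breaking constraints~\eqref{eq:sym-break}: in any $\bm{x}\in\mathcal{X}'$ the positions carrying the identity form a (possibly empty) contiguous suffix $\{m,\dots,N\}$ with $m\ge 2$. Combined with the definitions of $N^*$ and $N^I$, this pins the aggregated cluster sets down to three regions: $I\notin\mathcal{A}_{c,n}$ for $n<N^*$; $I\in\mathcal{A}_{c,n}$ with $\mathcal{A}_{c,n}\setminus\{I\}\neq\emptyset$ for $N^*\le n<N^I$; and $\mathcal{A}_{c,n}=\{I\}$ for $n\ge N^I$ when $N^I$ is defined. In particular $I\in\mathcal{A}_{c,n}$ for every $n\ge N^*$, and $N^*\ge 2$ since $x_{1,I}=0$. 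The crucial observation is that the enlarged sets $\mathcal{\tilde{A}}$ differ from $\mathcal{A}_{c,n}$ only by \emph{adding} $I$ at the early positions $n\in\{2,\dots,N^*-2\}$, and by \emph{forcing} the singleton $\{I\}$ at the truncation position $\bar N$.

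The core of the argument is a forward-propagation step. Suppose $\bm{x}\in\mathcal{X}'$ selects within the $\mathcal{\tilde{A}}$-sets at every summed position. At the truncation position $\bar N$ the set is $\{I\}$, so $x_{\bar N,I}=1$; by the monotonicity $x_{n,I}\le x_{n+1,I}$ the entire tail $\bar N,\dots,N$ is identity, and since $\bar N\ge N^*$ we have $I\in\mathcal{A}_{c,n}$ there, so the tail already lies in $\mathcal{A}_{c,n}$. Next, if $\bm{x}$ used $I$ at any early position $n_0\in\{2,\dots,N^*-2\}$, then forward propagation would force $x_{N^*-1,I}=1$; but $\mathcal{\tilde{A}}^{\bar N}_{c,N^*-1}=\mathcal{A}_{c,N^*-1}\not\ni I$, contradicting that $\bm{x}$ selects within $\mathcal{\tilde{A}}$ at position $N^*-1$. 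Hence $\bm{x}$ uses a non-identity—necessarily a member of $\mathcal{A}_{c,n}$—at every position up to $\bar N-1$. Combining the head and the tail, $\bm{x}$ selects within $\mathcal{A}_{c,n}$ at all $N$ positions, contradicting~\eqref{eq:clusterNoGoodCut}. Case~(ii) applies this with $\bar N$ ranging only up to $N^I$, because for $n\ge N^I$ the set $\mathcal{A}_{c,n}=\{I\}$ already forces the tail and nothing is gained, yielding~\eqref{eq:case2}. Case~(i), where $N^I$ is undefined, admits every truncation length $\bar N=N^*,\dots,N-1$ (giving~\eqref{eq:case1a}) together with the untruncated inequality~\eqref{eq:case1b}; for the latter there is no forced-$\{I\}$ position, but the same propagation to $N^*-1$ excludes an early identity and forces membership in $\mathcal{K}_c$.

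The main obstacle, and where I expect most of the care to go, is the boundary bookkeeping around $N^*$ and $N^I$. The propagation argument hinges on $N^*-1$ sitting in the ``$\mathcal{A}_{c,n}$, no $I$'' branch of the definition of $\mathcal{\tilde{A}}$, so that it can serve as the landing position contradicting an early identity selection; one must verify the index ranges $\{2,\dots,N^*-2\}$, $\{N^*-1,\dots,\bar N-1\}$, and the singleton position $\bar N$ are mutually consistent and collapse correctly when $N^*\in\{2,3\}$ (empty enlargement) or when $\bar N=N^*$. In case~(ii) one must additionally confirm that truncating at $N^I$ loses nothing, since the forced identity region $\{N^I,\dots,N\}$ contributes positions that are automatically in $\mathcal{A}_{c,n}$. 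Everything else is the routine translation between the covering-inequality form and the nonexistence of an offending SOS1 selection.
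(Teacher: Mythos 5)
Your proposal is correct and follows essentially the same route as the paper's proof: both arguments show that any point violating the enhanced inequality either realizes a member of $\mathcal{K}_c$ (via the identity tail forced by the singleton $\{I\}$ at position $\bar{N}$ together with the monotonicity in~\eqref{eq:sym-break}, so it is already excluded by the Cartesian cut~\eqref{eq:clusterNoGoodCut}) or uses $I$ at some position in $\{2,\dots,N^*-2\}$ and is then forced to violate~\eqref{eq:sym-break} because $I\notin\mathcal{\tilde{A}}^{\bar{N}}_{c,N^*-1}$. The only difference is organizational — you run a single contradiction with forward propagation where the paper does an explicit case split on $x_{\bar{N},I}$ and on whether the prefix lies in $\Gamma$ — which does not change the substance.
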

\begin{proof}[Proof.]
We will first prove the validity of inequality~\eqref{eq:case1a} by showing that it is valid for both $\mathcal{X}'_0:=\mathcal{X}'\cap\{ x_{\bar{N},I} = 0 \}$ and $\mathcal{X}'_1:=\mathcal{X}'\cap\{ x_{\bar{N},I} = 1 \}$ (notice that we have $\mathcal{\tilde{A}}_{c,n}^{\bar{N}} = \{I\}$ in this case).  
\begin{itemize}
    \item Case 0: For the set $\mathcal{X}'_0$, the validity trivially follows since the remaining inequality 
    {$\sum_{n=1}^{\bar N - 1}\sum_{k \in \mathcal{\tilde{A}}_{c,n}^{\bar{N}} } x_{n,k} \leq \bar N - 1$ is simply redundant.}
\item Case 1: For the set $\mathcal{X}'_1$, consider an element $\gamma \in \mathcal{\tilde{A}}_{c,n}^{1} \times \dots \times \mathcal{\tilde{A}}_{c,n}^{\bar{N}} $.  We have two subcases:
\begin{itemize}
    \item Case 1a: We have $(\gamma_1,\dots,\gamma_{\bar{N}-1}) \in \Gamma:=\mathcal{A}_{c,1} \times \dots \times \mathcal{A}_{c,\Bar{N}-1}$.  {Since $(\gamma_1,\dots,\gamma_{\bar{N}-1},I,\dots,I) \in \mathcal{K}_c$,  inequality~\eqref{eq:case1a} is valid as it does not cut off any new solutions.}
    \item Case 1b: We have   $(\gamma_1,\dots,\gamma_{\bar{N}-1}) \not\in \Gamma$, which implies that there exists $n' \in \{2,\dots,N^*-2\}$ such that $\gamma_{n'}=I$. {Since $\gamma_{n'}=I$ and $I \not\in \mathcal{\tilde{A}}^{\bar{N}}_{c,N^*-1}$,  inequality~\eqref{eq:case1a} can only cut off solutions that violate the symmetry breaking constraint~\eqref{eq:sym-break}. Therefore, it remains valid.}
\end{itemize}
In both cases, inequality~\eqref{eq:case1a} does not cut off any  feasible solutions other than the ones coming from $\mathcal{K}_c$. Therefore, it is valid.
\end{itemize}
The validity of inequality~\eqref{eq:case1b} can be proven by repeating Case 1b of the proof of the validity of inequality~\eqref{eq:case1a}.
The validity of inequality~\eqref{eq:case2} can be proven by repeating the proof of the validity of inequality~\eqref{eq:case1a} by replacing $N-1$ with $N^I$.
\hfill \Halmos
\end{proof}
Note that inequalities \eqref{eq:case1}-\eqref{eq:case2} are stronger than the Cartesian cuts~\eqref{eq:clusterNoGoodCut}.

\subsubsection{Scenario Regrouping}\label{subsec:scenarioShuffling}

An interesting property of Algorithm~\ref{alg:stochasticSampledDecompAverse} is that the upper bound obtained via Proposition~\ref{prop:stochasticSampledDecompAverse} depends on the composition of the batches. In particular, even under the same solution, by regrouping batches, we may obtain different upper bounds. 
For example, suppose that we have eight scenarios and two batches with the same optimal solutions that give the values $ \ s_{0.25}( [{0.9}, 1.0, 1.0, 1.0] )=0.9$ and 
$s_{0.25}( [{0.5}, 0.5, 0.8, 0.8] )=0.5$, which lead to the upper bound
$ \text{UB}=\frac{0.9+0.5}{2} = 0.7$. Notice that these batches are \textit{unbalanced} as the scenarios with higher (resp. lower) probabilities are in Batch 1 (resp. 2). However, if we regroup the scenarios in a \textit{balanced} manner, we obtain the values
$ \ s_{0.25}( [{0.5}, 0.8, 0.9, 1.0] ) = s_{0.25}( [{0.5}, 0.8, 1.0, 1.0] )=0.5$, which lead to the upper bound 
$ \text{UB}=\frac{0.5+0.5}{2} = 0.5$.
%

Motivated by our observation that the optimal solution of one of the batch subproblems from the first iteration is generally optimal for the overall problem, we propose regrouping the scenarios in such a way that each batch is {balanced}, in the sense that, the scenarios in each batch share a similar distribution. In order to achieve this, we first compute that objective function value of each scenario $h$ and sort them. Let the sorted scenarios be labeled as $(h)$. Then, we regroup the scenarios into batches according to the following rule for $p=1,\dots,P$: $\mathcal{H}^p =\{ (h) :  (h) - 1 \mod P   \equiv p - 1  \}$.

\subsubsection{Warm Start}\label{subsec:cutinjection}

Notice that any feasible solution obtained for the instance with $N$ transitions can be extended to a feasible solution with the instance with $N+1$ transitions by simply appending an identity matrix for the last position. Motivated by this observation, we record the feasible solutions and the cuts generated by Algorithm~\ref{alg:stochasticSampledDecompAverse} for the instance with $N$ transitions, and \textit{warm start} the algorithm for the instance  with $N+1$ transitions by the lower bound obtained from the solutions and with the addition of the no-good cuts. This enhancement has the potential to help the progress on both lower and upper bounds, hence, decrease the number of iterations, at the cost of solving bulkier MILP models.


\section{Dynamic Stochastic Problem}
\label{sec:ABR-dynamic}

The static version of the problem studied in the previous section determines an antibiotic sequence at the beginning of the treatment. In the dynamic version studied in this section, it is allowed to observe the genotype of the bacteria after each antibiotic application. 
We  present how to formulate and solve the dynamic risk-averse version  in Sections~\ref{sec:ABR-dynamic-riskaverse}. Then, similar to the static version, we propose  a scenario decomposition algorithm in Section~\ref{sec:scen-decomp-dynamic} and several algorithmic enhancements to increase its computational efficiency in Section~\ref{sec:improvement-dynamic}.

\subsection{Risk-Averse Version}
\label{sec:ABR-dynamic-riskaverse}

The risk-averse dynamic  formulation of the Antibiotics Time Machine Problem can be stated similar to problem~\eqref{eq:stochasticSampledAverse} as 
\begin{equation*} \label{eq:stochasticSampledAverseDynamic}
\max_{ \bm{y} \in \mathcal{Y}}  s_\alpha \left( [ \ell_h (\bm{y})]_{h\in \mathcal{H}} \right) ,
\end{equation*}  
where the set
$ \mathcal{Y} := \{ \bm{y} \in \{0,1\}^{K \times d} : \sum_{k=1}^K y_{k,i}=1, i=1,\dots,d \}$ ensures that exactly one antibiotic~$k$ is chosen for each genotype~$i$ with the introduction of the binary variable $y_{k,i}$.
%
%
Here, $\ell_h (\bm{y})$ can be computed as $u_{N,i_w}^h$, where $i_w$ is the ID of the wild type, by fixing the $\bm{y}$ decisions and the initial condition $\bm{u_{n}^h} = \bm{r}$ in the following recursion ($\ell_h (\bm{y})$ can also be computed as the output of a forward DP, similar to the one used in~\citet{mesum2024}):
\begin{equation}
    {u^{h}_{n,j}} = \sum_{k=1}^K \sum_{i=1}^d u^{h}_{n-1,i} M_{ij}^{k,h} y_{k,i} \qquad n=1,\dots,N, j=1,\dots, d, h \in \mathcal{H}. \label{eq:dynamicrecursion}
\end{equation}


We now model the dynamic version of problem as an MILP.
Let $\bm{u_n}$ denote the probability distribution after the $n$-th antibiotic is applied as in the static version. 
To linearize the recursion~\eqref{eq:dynamicrecursion}, we define the decision variables $w_{n-1,k,i}^h$ and utilize the equation: $w_{n-1,k,i}^h = {{u_{n-1,i}^h}} y_{k,i}$.
We then provide an MILP formulation for the risk-averse
dynamic version of the problem as below:
\begin{subequations}\label{eq:dynamicRiskAverseMILP}
\begin{align}
 \textup{D-RA}_\alpha(\mathcal{H} ): \  &  
\max_{\bm{u}, \, \bm{w}, \, \bm{y}\in\mathcal{Y} , \lambda, \mu }   \lambda - \frac{1}{\alpha |\mathcal{H}|} \sum_{h \in \mathcal{H} } \mu_h \label{eq:dynamicRiskAverseObj} \\ 
\textup{s.t. } & \lambda - \mu_h \leq \bm{q}^\top \bm{u_N^h} &h& \in \mathcal{H}  \label{eq:dynamicRiskAverse-dual1} \\
\ & \mu_h \geq 0 &h& \in \mathcal{H} \label{eq:dynamicRiskAverse-dual2}\\
\ &  \bm{u_{0}^{h}} = \bm{r} &h&\in \mathcal{H} \label{eq:dynamicRiskAverse-recursionfirst}\\
 \  & u_{n,j}^{h} = \sum_{k=1}^K \sum_{i=1}^d   M_{ij}^{k,h} w_{n-1,k,i}^{h}  &n&=1,\dots,N,  j=1,\dots,d, h \in \mathcal{H}    \\
 \  &  w_{n-1,k,i}^{h} \ge 0, \  \  w_{n-1,k,i}^{h} \le y_{k,i}  &n&=1,\dots,N, k=1,\dots,K, i=1,\dots,d, h \in \mathcal{H}       \\ 
\  & \sum_{k = 1}^K w_{n-1,k,i}^{h}=  u_{n-1,i}^{h}   &n& =1,\dots,N, i=1,\dots,d, h \in \mathcal{H} \label{eq:dynamicRiskAverse-recursionlast} \\
  \  &  \sum_{j=1}^{d} u_{n,j}^{h} = 1 &n&=1,\dots,N, h \in \mathcal{H} \label{eq:dynamicRiskAverse-valid} \\ 
 \ &   \bm{u_{n}^{h}} \in \mathbb{R}_+^d   &n&=1,\dots,N,  h \in \mathcal{H} . \label{eq:dynamicRiskAverse-domainRestrictions}
\end{align}
\end{subequations}
In this formulation, the objective function~\eqref{eq:dynamicRiskAverseObj} together with constraints~\eqref{eq:dynamicRiskAverse-dual1}-\eqref{eq:dynamicRiskAverse-dual2} model the risk-averse objective function.  Constraint~\eqref{eq:dynamicRiskAverse-recursionfirst} ensures that the initial state distribution is $\bm{r}$. 
Constraints~\eqref{eq:dynamicRiskAverse-recursionfirst}-\eqref{eq:dynamicRiskAverse-recursionlast} linearize the recursion \eqref{eq:dynamicrecursion}. Constraint~\eqref{eq:dynamicRiskAverse-valid} is a valid equality which enforces that $\bm{u_n^h}$ is a probability vector. Finally, constraint~\eqref{eq:dynamicRiskAverse-domainRestrictions} gives the variable domain restrictions.

Although the risk-neutral dynamic version can be modeled and solved as a Markov Decision Process as in \citet{mesum2024}, the DP proposed in that paper does not directly extend to the risk-averse objective we study in this paper. Therefore, we will resort to the MILP proposed above and propose a scenario batch decomposition algorithm to solve it efficiently.

\subsection{Scenario Decomposition Algorithm}
\label{sec:scen-decomp-dynamic}

As in the static version,  $\textup{D-RA}_\alpha(\mathcal{H} )$  is also challenging to solve. We once  again propose a scenario decomposition algorithm that relies on lower and upper bounds derived in the following proposition (we omit the proof of this proposition due to its similarity to that of Proposition~\ref{prop:stochasticSampledDecompAverse}):
\begin{proposition}\label{prop:stochasticSampledDecompAverseDynamic}
Let $\alpha \in (0,1]$. Denote the optimal value of problem  $\textup{D-RA}_\alpha(\mathcal{H})$ as $\hat z_\mathcal{H}$ and $\bm{y^{(p)}} = \argmax_{\bm{y} \in \mathcal{Y}}  s_\alpha \left( [ \ell_h (\bm{y})]_{h\in \mathcal{H}^p} \right)$.
Then, 
\[
\underbrace{\max_{p  =1,\dots,  P} \left\{ s_\alpha \left( [ \ell_h (\bm{y^{(p)}})]_{h\in \mathcal{H}} \right) \right\}}_{LB}
\le 
\hat z_{\mathcal{H}} 
\le
\underbrace{\frac{1}{P} \sum_{p=1}^P  s_\alpha \left( [ \ell_h (\bm{y^{(p)}})]_{h\in \mathcal{H}^p} \right).}_{UB}
\]
\end{proposition}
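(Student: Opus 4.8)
The plan is to follow the same two-part argument used for Proposition~\ref{prop:stochasticSampledDecompAverse}, since passing from the static to the dynamic setting only amounts to replacing $\bm{x}\in\mathcal{X}$ by $\bm{y}\in\mathcal{Y}$, the scenario objective $f_h$ by $\ell_h$, and the copy variables $\bm{v}$ by $\bm{w}$. The two structural facts that make the static proof go through carry over verbatim: the feasible set $\mathcal{Y}$ carries no scenario-dependent restriction, and $s_\alpha$ admits the polyhedral representation of Proposition~\ref{prop:sConcavePoly}. First I would establish the lower bound, then the upper bound.

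For the lower bound, fix $p\in\{1,\dots,P\}$. The maximizer $\bm{y^{(p)}}$ of the batch subproblem $\textup{D-RA}_\alpha(\mathcal{H}^p)$ lies in $\mathcal{Y}$, and since the constraints defining $\mathcal{Y}$ are common to all scenarios, $\bm{y^{(p)}}$ can be completed to a feasible point of $\textup{D-RA}_\alpha(\mathcal{H})$ by defining the state and copy variables $\bm{u_n^h}$ and $w_{n-1,k,i}^h$ for every $h\in\mathcal{H}$ through the recursion~\eqref{eq:dynamicrecursion} (equivalently, constraints~\eqref{eq:dynamicRiskAverse-recursionfirst}--\eqref{eq:dynamicRiskAverse-recursionlast}). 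Hence $s_\alpha\!\left([\ell_h(\bm{y^{(p)}})]_{h\in\mathcal{H}}\right)\le\hat z_{\mathcal{H}}$, and taking the maximum over $p$ yields the claimed lower bound.

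For the upper bound, let $\bm{y^*}$ be optimal for $\textup{D-RA}_\alpha(\mathcal{H})$, so that $\hat z_{\mathcal{H}}=s_\alpha\!\left([\ell_h(\bm{y^*})]_{h\in\mathcal{H}}\right)$. The same extension argument shows $\bm{y^*}$ is feasible for each $\textup{D-RA}_\alpha(\mathcal{H}^p)$, so optimality of $\bm{y^{(p)}}$ on batch $p$ gives $s_\alpha\!\left([\ell_h(\bm{y^*})]_{h\in\mathcal{H}^p}\right)\le s_\alpha\!\left([\ell_h(\bm{y^{(p)}})]_{h\in\mathcal{H}^p}\right)$. It then remains to chain this with the averaging inequality $s_\alpha\!\left([\upsilon_h]_{h\in\mathcal{H}}\right)\le\frac1P\sum_{p=1}^P s_\alpha\!\left([\upsilon_h]_{h\in\mathcal{H}^p}\right)$, valid for the equipartition with $\alpha|\mathcal{H}^p|\in\mathbb{Z}_+$. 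I would establish this last inequality --- the \emph{main obstacle} --- directly from Proposition~\ref{prop:sConcavePoly}: take a globally optimal $\lambda^*$ in the representation of $s_\alpha([\upsilon_h]_{h\in\mathcal{H}})$, set $\mu_h=(\lambda^*-\upsilon_h)^+$, and split the single sum over $\mathcal{H}$ into the $P$ batch sums; since $|\mathcal{H}|=P|\mathcal{H}^p|$, this rewrites $s_\alpha([\upsilon_h]_{h\in\mathcal{H}})$ as the average over $p$ of $\lambda^*-\tfrac1{\alpha|\mathcal{H}^p|}\sum_{h\in\mathcal{H}^p}(\lambda^*-\upsilon_h)^+$, each term of which is bounded above by the batch optimum $s_\alpha([\upsilon_h]_{h\in\mathcal{H}^p})$ because $\lambda^*$ need not be optimal for the individual batch. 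Applying this with $\upsilon_h=\ell_h(\bm{y^*})$ and combining with the batch-wise inequality gives $\hat z_{\mathcal{H}}\le\frac1P\sum_p s_\alpha\!\left([\ell_h(\bm{y^{(p)}})]_{h\in\mathcal{H}^p}\right)$, completing the proof exactly as in the static case.
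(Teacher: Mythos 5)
Your proposal is correct and follows essentially the same route as the paper, which omits this proof precisely because it mirrors that of Proposition~\ref{prop:stochasticSampledDecompAverse} after substituting $\mathcal{Y}$, $\ell_h$, and $\bm{w}$ for $\mathcal{X}$, $f_h$, and $\bm{v}$. The only difference is that you explicitly justify the averaging inequality $s_\alpha\left([\upsilon_h]_{h\in\mathcal{H}}\right)\le\frac1P\sum_{p}s_\alpha\left([\upsilon_h]_{h\in\mathcal{H}^p}\right)$ via the polyhedral representation of Proposition~\ref{prop:sConcavePoly} (a correct argument: the global $\lambda^*$ with $\mu_h=(\lambda^*-\upsilon_h)^+$ is feasible but generally suboptimal for each batch), whereas the paper asserts this step without proof.
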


Our dynamic 
  scenario decomposition algorithm is given below:
\begin{algorithm}[H]
\caption{Dynamic Scenario Decomposition Algorithm.}
\label{alg:dynamicRiskAverseAlgo}
\begin{algorithmic}
\STATE Set $t=1$, $\bm{y^*}=0$, $LB=0$ and $UB=1$.
\WHILE{$UB - LB > \epsilon$ and $t \le \tau$ }
\STATE Solve $\textup{D-RA}_\alpha(\mathcal{H}^p )$  in parallel for each $p\in P$.  
\STATE  If needed, update  $LB$ and $UB$  as in Proposition~\ref{prop:stochasticSampledDecompAverseDynamic}  and the best feasible solution $\bm{y^*}$ 
\STATE Add the following no-good cut to  the set~$\mathcal{Y}$ for each distinct solution $\bm{\bm{y^{(p)}}}$ obtained:
\begin{equation}
    \sum_{ (k,i) : y_{k,i}^{(p)}=1} y_{k,i} \le d-1 .\label{eq:dynamicNoGoodCut}
\end{equation}
\STATE Increment $t=t+1$.
\ENDWHILE
\STATE  Carry out an ``out-of-sample" analysis with a new sample $\mathcal{H}'$ by computing $ s_\alpha \left( [ \ell_h (\bm{y^*})]_{h\in \mathcal{H}'}\right)$.
\end{algorithmic}
\end{algorithm}

\subsection{Enhancements}
\label{sec:improvement-dynamic}

For the dynamic version of the problem, two of the enhancements introduced for the static version are directly applicable: i) scenario regrouping (Section \ref{subsec:scenarioShuffling}), ii)  warm start (Section \ref{subsec:cutinjection}). 
{We would like to note that for the latter enhancement, unlike the static version, we  cut the solutions coming from smaller instances after evaluating their objective function values since these values may improve as $N$ is increased.} 
{Although the Cartesian cut enhancement (Section \ref{subsec:clusteringEnhancement}) is also applicable for the dynamic version in principle, our preliminary experiments have showed that the number of solutions in clusters increases rapidly (as $d > N$ in our computational setting), resulting in prohibitively large number evaluations, and, therefore, slowing down our algorithm.}

As an additional enhancement, we introduce the notion of
  \textit{irrelevant} genotypes, which we now explain. 
Given an initial genotype $\bm{i_I}\in\{0,1\}^a$ and the length of treatment $N$, we define the set of irrelevant genotypes as
$
\mathcal{I}_N(\bm{i_I}):=\{ \bm{g}\in\{0,1\}^a : \| \bm{i_I} - \bm{g} \|_1+ \|\bm{g}\|_1 > N\}
$. Notice that these are exactly the genotypes for which the number of transitions from $\bm{i_I}$ to $\bm{g}$ and then to $\bm{0}$ requires more than $N$ antibiotic applications. Since this happens with zero probability no matter what the policy is, we can safely set $y_{I,i}=1$ from the very beginning for the ID of such genotypes. 
To give an example, for the instance in  Figure~\ref{fig:abr-image}, if the initial genotype is 001 and $N=2$, then the genotype 101 is irrelevant.
The identification of irrelevant genotypes is crucial for the progress of the upper bound in Algorithm~\ref{alg:dynamicRiskAverseAlgo}, since, otherwise, the algorithm might stall. 

\section{Computational Results}
\label{sec:compresults}

In this section, we provide the results of our computational experiments. We use a 64-bit workstation with two Intel(R) Xeon(R) Gold 6248R CPU (3.00GHz) processors (256 GB  RAM) and the Python programming language. We utilize  Gurobi 11 to solve the MILPs with the default settings, except the absolute optimality gap parameter is set to 0.001 and the time limit is set to two hours. We set $|\mathcal{H}|=|\mathcal{H}'|=2000$, $|\mathcal{H}^p|=50$, $\alpha = 0.1$, $\epsilon=0.01$ and $\tau=5$.  
We use the experimental data from \citet{mira2017statistical} with $K=23$ antibiotics and $d=2^4 $ genotypes, and  sample the transition probability  matrices as explained in Appendix~\ref{app:matrixConstruction}. 
We also add the identity matrix as an additional antibiotic to model the ``no in-take" action.




\subsection{Static Version}\label{subsec:staticCompExp}

In this section, we provide the results of the computational experiments that we have conducted for the static version of the problem.

\subsubsection{Computational Effort}

In this part, we first analyze the effect of enhancements we introduce in Section~\ref{sec:improvements-static}. For this purpose, we run our algorithm under the following settings:
\begin{itemize}
    \item Setting~0: No enhancements
    \item Setting~1: All enhancements except Cartesian Cuts (Section~\ref{subsec:clusteringEnhancement})
    \item Setting~2: All enhancements except symmetry breaking inequalities and symmetry-enhanced Cartesian Cuts   (Section~\ref{subsec:symmetryNoGood})
    \item Setting~3: All enhancements except scenario regrouping (Section~\ref{subsec:scenarioShuffling})
    \item Setting~4: All enhancements except warm start (Section~\ref{subsec:cutinjection})
    \item Setting~5: All enhancements
\end{itemize}

In Table \ref{tab:Static-EnhancementEffect}, for two initial genotype $\bm{i_I}$ and treatment length $N$ combinations, we report the number of iterations needed for Algorithm~\ref{alg:stochasticSampledDecompAverse}  to terminate (in column ``\#") along with the CPU time in seconds (in column ``Time").

\begin{table}[H]
\centering
\small
\caption{Effect of enhancements for the risk-averse static version in terms of computational effort.} \label{tab:Static-EnhancementEffect}
\begin{tabular}{|c|c|cr|cr|cr|cr|cr|cr|}
\hline
& & \multicolumn{2}{c|}{\textbf{Setting~0}} & \multicolumn{2}{c|}{\textbf{Setting~1}} & \multicolumn{2}{c|}{\textbf{Setting~2}} & \multicolumn{2}{c|}{\textbf{Setting~3}
} & \multicolumn{2}{c|}{\textbf{Setting~4}} & \multicolumn{2}{c|}{\textbf{Setting~5}} \\ \hline
{$N$} & \textbf{$\bm{i_I}$}& \textbf{\#} & \textbf{Time} & \textbf{\#}  & \textbf{Time} & \textbf{\#}  & \textbf{Time} & \textbf{\#} & \textbf{Time} & \textbf{\#} & \textbf{Time} 
& \textbf{\#} & \textbf{Time} \\ \hline
\textbf{4} & \textbf{0001}& 4& 2441& 1& 593& 2& 1246& 1& 694& 2& 1153& 1& 697\\
\textbf{5} & \textbf{1111}& 5& 5063& 4& 3538& 5& 5482& 4& 3676& 3& 2754& 3&  2650\\ \hline
\end{tabular}

\end{table}

As can be seen from Table \ref{tab:Static-EnhancementEffect}, Setting~5 is significantly more efficient that Setting~0 for the first instance. In terms of individual enhancements, we observe that Setting~2 and Setting~4 require more iterations than Setting~5, indicating the importance of symmetry exploitation and warm start, respectively. We also note that Setting~1 is faster than Setting~5, suggesting that the overhead of implementing the Cartesian cuts exceeding their benefit in this case.
As for the second instance, we observe that the exclusion of any enhancement increases the solution time, suggesting that the enhancements might be even more influential for instances with larger values of $N$. Unlike the first instance, we see that Setting~1 and Setting~3 are also slower than Setting~5 for the second instance, indicating the importance of the Cartesian cuts and scenario regrouping. 
We note that, in addition to these two instances, we report the results of the computational experiments for each of the settings introduced above for each initial genotype with $N=4$ in Appendix \ref{app:detailedComputations} as well.

Next, we run the risk-averse model for $N=5,\dots,8$ using all the enhancements (i.e., Setting~5), and report the results in Table~\ref{tab:risk-averseN=5-8static}.
We report the number of iterations, or the difference between UB and LB (in italic) after the last iteration  if  this difference is larger than $\epsilon$, under the column ``\#/\%". 
Despite the computational benefits brought by the enhancements, it is still challenging to solve the instances with larger $N$ values. Therefore, we apply the following filtering rules to eliminate the antibiotics that do not appear in the optimal solutions of the instances with smaller~$N$ values:
\begin{itemize}
    \item Filter I: This filter is applied to instances with $N=5,6$ and it is \textit{genotype-independent}, that is, the same filtered set of antibotics is used for any initial genotype $\bm{i_I}$.
    In particular, we restrict the set of antibiotics to those that appear in  the optimal solutions of instances with  $N=4$ for any $\bm{i_I}$. This reduces the number of antibiotics from 23 to 13.
    \item Filter II: This filter is applied to instances with $N=7,8$ and it is \textit{genotype-dependent}, that is, a different filtered set of antibotics is used for each initial genotype $\bm{i_I}$.
     In particular, we restrict the set of antibiotics for an initial genotype $\bm{i_I}$ to those that appear in  the optimal solutions of instances with $N=4,5,6$   for the same initial genotype $\bm{i_I}$. This reduces the number of antibiotics to a range of one to six, depending on the initial genotype.   
\end{itemize}

\begin{table}[H]
\centering
\small
\caption{Computational effort to solve the risk-averse static version for $N=5,\dots,8$ with different filtering mechanisms (no iteration limit is enforced for $N=5$).}
\label{tab:risk-averseN=5-8static}
\begin{tabular}{|c|cr|cr|cr|cr|cr|}
\hline
 & \multicolumn{2}{c|}{\textbf{$N=5$}} & \multicolumn{2}{c|}{\textbf{$N=5$ - Filter I}} & \multicolumn{2}{c|}{\textbf{$N=6$ - Filter I}} & \multicolumn{2}{c|}{\textbf{$N=7$ - Filter II}} & \multicolumn{2}{c|}{\textbf{$N=8$ - Filter II}} \\ \hline
\textbf{$\bm{i_I}$} & \textbf{\#/\%} & \textbf{Time} & \textbf{\#/\%} & \textbf{Time} & \textbf{\#/\%} & \textbf{Time} & \textbf{\#/\%} & \textbf{Time} & \textbf{\#/\%} & \textbf{Time} \\ \hline
\textbf{1000} & 1 & 5360 & 1 & 1045 & 1 & 6211 & 1 & 4 & 1 & 4 \\
\textbf{0100} & 1 & 2077 & 1 & 442 & 1 & 2605 & 1 & 4 & 1 & 4 \\
\textbf{0010} & 1 & 1647 & 1 & 408 & 1 & 2143 & 1 & 103 & 1 & 239 \\
\textbf{0001} & 1 & 3228 & 1 & 615 & 1 & 3835 & 2 & 1065 & 2 & 4184 \\
\textbf{1100} & 1 & 3705 & 1 & 715 & 1 & 4189 & 1 & 9 & 1 & 30 \\
\textbf{1010} & 1 & 1985 & 1 & 387 & 1 & 2633 & 2 & 453 & 2 & 1168 \\
\textbf{1001} & 2 & 7586 & 2 & 1667 & 2 & 9523 & 2 & 5585 & \textit{0.15} & 36398 \\
\textbf{0110} & 1 & 2213 & 1 & 378 & 1 & 1730 & 1 & 49 & 1 & 76 \\
\textbf{0101} & 3 & 14816 & 1 & 1033 & 1 & 3856 & 1 & 1484 & 2 & 9796 \\
\textbf{0011} & 2 & 4296 & 2 & 940 & 2 & 5380 & 2 & 449 & 2 & 1345 \\
\textbf{1110} & 2 & 5689 & 2 & 1005 & 2 & 6582 & 2 & 607 & 2 & 1952 \\
\textbf{1101} & 3 & 14438 & 2 & 1794 & 4 & 21925 & 4 & 3410 & 5 & 14021 \\
\textbf{1011} & 1 & 2004 & 1 & 406 & 2 & 5312 & 2 & 584 & 2 & 1546 \\
\textbf{0111} & 2 & 4780 & 2 & 850 & 2 & 5572 & 2 & 999 & 3 & 6231 \\
\textbf{1111} & 9 & 47852 & 3 & 2650 & 3 & 16716 & 3 & 3588 & \textit{0.02} & 26115 \\ \hline
\textbf{Avg} & 2.1 & 8112 & 1.5 & 956 & 1.7 & 6548 & 1.8 & 1226 & 2.3 & 6874 \\ \hline
\end{tabular}
\end{table}

Although we introduce the filtering mechanisms to relieve the computational burden, there is also a biological reasoning: some of the antibiotics from the data set of \citet{mira2017statistical} have the same active ingredient, therefore, it is conceivable that only some of those would appear in the optimal solutions. A further justification of the filtering mechanism can be found in the first two columns of Table~\ref{tab:risk-averseN=5-8static}, where we compare the computational effort for $N=5$ without and with filtering. In fact, the solutions obtained under these two experiment turn out to be identical (recall that we have decided the filtering based on the optimal solutions of instances with  $N=4$), yet the CPU time is decreased from 8112 seconds to 956 seconds on average. We note that filtering mechanisms allow us to solve the corresponding instances within a reasonable time. The only two time-outs are observed when $N=8$ and the initial genotypes are 1001 and 1111.


\subsubsection{Comparison of Risk-Averse and Risk-Neutral Solutions}

In this section, we compare  risk-averse and risk-neutral solutions from different perspectives. Our results are visualized in Figure~\ref{fig:riskneutral vs riskaverse static}, where we report six values for each initial genotype and $N=3,\dots,8$:
\begin{itemize}
    \item RA-In-10\%: The optimal value of the ``in-sample" optimization problem under the risk-averse objective.
    \item RA-Out-Avg: The average value of all ``out-of-sample" scenarios when the risk-averse solution is applied.
    \item RA-Out-10\%: The average value of the worst 10\%``out-of-sample" scenarios when the risk-averse solution is applied.
    \item RN-In-Avg: The optimal value of the ``in-sample" optimization problem under the risk-neutral objective.
    \item RN-Out-Avg: The average value of all ``out-of-sample" scenarios when the risk-neutral solution is applied.
    \item RN-Out-10\%: The average value of the worst 10\%``out-of-sample" scenarios when the risk-neutral solution is applied.
\end{itemize}

\begin{figure}[ht]

\caption{Comparison of risk-averse and risk-neutral solutions for the static version.}
\label{fig:riskneutral vs riskaverse static}

\begin{subfigure}{.475\textwidth}
\begin{tikzpicture}[scale=0.7]
    \begin{axis}[
    title={\large{$N=3$}},
        width=12cm, 
        height=8cm, 
        ylabel={Probability},
        ymin=0, ymax=1,
        xtick=data,
        symbolic x coords={1000, 0100, 0010, 0001, 1100, 1010, 1001, 0110, 0101, 0011, 1110, 1101, 1011, 0111, 1111},
                xticklabels={\textbf{1000}, 0100, 0010, \textit{0001}, \textbf{1100}, \textit{1010}, \textit{1001}, 0110, \textit{0101}, \textbf{0011}, 1110, \textit{1101}, \textbf{1011}, \textit{0111}, 1111},
        ytick={0, 0.2, 0.4, 0.6, 0.8, 1.0},
        xticklabel style={rotate=90, anchor=east},
        legend style={at={(1.03,1.15)}, anchor=south,legend columns=6},
        grid=both,
        grid style = {dashed}
    ]

\addplot[
        only marks,
        mark= x,
        color=darkgreen,line width=3pt,
        mark size=7.5pt
    ] coordinates {
        (1000, 0.227)
        (0100, 0.826)
        (0010, 0.644)
        (0001, 0.159)
        (1100, 0.214)
        (1010, 0.214)
        (1001, 0.095)
        (0110, 0.651)
        (0101, 0.156)
        (0011, 0.23)
        (1110, 0.338)
        (1101, 0.098)
        (1011, 0.135)
        (0111, 0.277)        (1111, 10)

    };
    \addlegendentry{RA-In-10\%  }

\addplot[
	only marks,
	fill = red,
	mark=triangle,line width=1.5pt,
	color=red,
	mark size=6pt
	] coordinates {
        (1000, 0.483)
        (0100, 0.937)
        (0010, 0.838)
        (0001, 0.225)
        (1100, 0.458)
        (1010, 0.361)
        (1001, 0.239)
        (0110, 0.842)
        (0101, 0.222)
        (0011, 0.585)
        (1110, 0.682)
        (1101, 0.37)
        (1011, 0.369)
        (0111, 0.387)        (1111, 10)

    };
    \addlegendentry{RA-Out-Avg}

\addplot[
        only marks,
        mark=triangle*,
        color=red,
        mark size=6pt
    ] coordinates {
        (1000, 0.231)
        (0100, 0.825)
        (0010, 0.639)
        (0001, 0.16)
        (1100, 0.218)
        (1010, 0.216)
        (1001, 0.095)
        (0110, 0.646)
        (0101, 0.156)
        (0011, 0.223)
        (1110, 0.363)
        (1101, 0.09)
        (1011, 0.129)
        (0111, 0.272)        (1111, 10)

    };
    \addlegendentry{RA-Out-10\%}


\addplot[
        only marks,
        mark= + ,
        color=darkgreen,line width=3pt,
        mark size = 7.5pt
    ]
	 coordinates {
        (1000, 0.641)
        (0100, 0.937)
        (0010, 0.842)
        (0001, 0.548)
        (1100, 0.61)
        (1010, 0.581)
        (1001, 0.571)
        (0110, 0.847)
        (0101, 0.501)
        (0011, 0.593)
        (1110, 0.683)
        (1101, 0.51)
        (1011, 0.424)
        (0111, 0.643)
                 (1111, 10)

    };
    \addlegendentry{RN-In-Avg}

\addplot[
        only marks,
        fill = blue,
        mark=o,line width=1.5pt,
        color=blue,
        mark size = 3.5pt
    ] coordinates {
        (1000, 0.627)
        (0100, 0.937)
        (0010, 0.838)
        (0001, 0.536)
        (1100, 0.595)
        (1010, 0.595)
        (1001, 0.56)
        (0110, 0.842)
        (0101, 0.501)
        (0011, 0.586)
        (1110, 0.682)
        (1101, 0.533)
        (1011, 0.428)
        (0111, 0.637)
        (1111, 10)

    };
\addlegendentry{RN-Out-Avg}

\addplot[
        only marks,
        mark=*,
        color=blue,
        fill = blue,
        mark size = 3.5pt
    ] coordinates {
        (1000, 0)  
        (0100, 0.825)
        (0010, 0.639)
        (0001, 0.08)
        (1100, 0)
        (1010, 0)
        (1001, 0)
        (0110, 0.646)
        (0101, 0)
        (0011, 0.192)
        (1110, 0.363)
        (1101, 0)
        (1011, 0)
        (0111, 0.282)
        (1111, 10)
    };
\addlegendentry{RN-Out-10\%}

 
\addplot[
        only marks,
        mark= x,
        color=white,line width=3pt,
        mark size=7.5pt
    ] coordinates {
        (1000, 10.227)
        (0100, 10.826)
        (0010, 10.644)
        (0001, 10.159)
        (1100, 10.214)
        (1010, 10.214)
        (1001, 10.095)
        (0110, 10.651)
        (0101, 10.156)
        (0011, 10.23)
        (1110, 10.338)
        (1101, 10.098)
        (1011, 10.135)
        (0111, 10.277)
        (1111, 0.227)
        };


    \end{axis}
\end{tikzpicture}

\end{subfigure}
\begin{subfigure}{.5\textwidth}
\begin{tikzpicture}[scale=0.7]
    \begin{axis}[
    title={\large{$N=4$}},
        width=12cm, 
        height=8cm, 
        ymin=0, ymax=1,
        xtick=data,
        symbolic x coords={1000, 0100, 0010, 0001, 1100, 1010, 1001, 0110, 0101, 0011, 1110, 1101, 1011, 0111, 1111},
                xticklabels={\textbf{1000}, 0100, 0010, \textit{0001}, \textbf{1100}, 1010, \textit{1001}, 0110, \textit{0101}, 0011, 1110, \textit{1101}, \textbf{1011}, \textbf{0111}, \textit{1111}},
        ytick={0, 0.2, 0.4, 0.6, 0.8, 1.0},
        xticklabel style={rotate=90, anchor=east},
        legend style={at={(10.25,1.1)}, anchor=south,legend columns=-1},
        grid=both,
        grid style = {dashed}
    ]

    \addplot[ 
        only marks,
        mark= x,
        color=white,line width=2pt,
        mark size=7.5pt
    ] coordinates {
        (1000, 10.227)
        (0100, 10.826)
        (0010, 10.645)
        (0001, 10.159)
        (1100, 10.214)
        (1010, 10.422)
        (1001, 10.159)
        (0110, 10.651)
        (0101, 10.156)
        (0011, 10.515)
        (1110, 10.355)
        (1101, 10.098)
        (1011, 10.135)
        (0111, 10.294)
        (1111, 10.099)
    };
    \addlegendentry{ }
    
    \addplot[
        only marks,
        mark= x,
        color=darkgreen,line width=3pt,
        mark size=7.5pt
    ] coordinates {
        (1000, 0.227)
        (0100, 0.826)
        (0010, 0.645)
        (0001, 0.159)
        (1100, 0.214)
        (1010, 0.422)
        (1001, 0.159)
        (0110, 0.651)
        (0101, 0.156)
        (0011, 0.515)
        (1110, 0.355)
        (1101, 0.098)
        (1011, 0.135)
        (0111, 0.294)
        (1111, 0.099)
    };

    \addplot[
        only marks,
        fill = red,
        mark=triangle,line width=1.5pt,
        color=red,
        mark size=6pt
    ] coordinates {
        (1000, 0.483)
        (0100, 0.937)
        (0010, 0.839)
        (0001, 0.225)
        (1100, 0.458)
        (1010, 0.66)
        (1001, 0.351)
        (0110, 0.842)
        (0101, 0.222)
        (0011, 0.75)
        (1110, 0.667)
        (1101, 0.37)
        (1011, 0.369)
        (0111, 0.635)
        (1111, 0.367)
    };

    \addplot[
        only marks,
        mark=triangle*,
        color=red,
        mark size=6pt
    ] coordinates {
        (1000, 0.231)
        (0100, 0.825)
        (0010, 0.64)
        (0001, 0.16)
        (1100, 0.218)
        (1010, 0.418)
        (1001, 0.16)
        (0110, 0.646)
        (0101, 0.156)
        (0011, 0.514)
        (1110, 0.369)
        (1101, 0.09)
        (1011, 0.129)
        (0111, 0.306)
        (1111, 0.092)
    };

        \addplot[
        only marks,
        mark= + ,
        color=darkgreen,line width=3pt,
        mark size = 7.5pt
    ] coordinates {
        (1000, 0.641)
        (0100, 0.937)
        (0010, 0.843)
        (0001, 0.548)
        (1100, 0.61)
        (1010, 0.665)
        (1001, 0.571)
        (0110, 0.847)
        (0101, 0.541)
        (0011, 0.757)
        (1110, 0.683)
        (1101, 0.57)
        (1011, 0.459)
        (0111, 0.643)
        (1111, 0.506)
    };

    \addplot[
        only marks,
        fill = blue,
        mark=o,line width=1.5pt,
        color=blue,
        mark size = 3.5pt
    ] coordinates {
        (1000, 0.627)
        (0100, 0.937)
        (0010, 0.839)
        (0001, 0.536)
        (1100, 0.595)
        (1010, 0.66)
        (1001, 0.56)
        (0110, 0.842)
        (0101, 0.529)
        (0011, 0.75)
        (1110, 0.682)
        (1101, 0.56)
        (1011, 0.448)
        (0111, 0.637)
        (1111, 0.529)
    };

    \addplot[
        only marks,
        mark=*,
        color=blue,
        fill = blue,
        mark size = 3.5pt
    ] coordinates {
        (1000, 0)
        (0100, 0.825)
        (0010, 0.64)
        (0001, 0.08)
        (1100, 0)
        (1010, 0.418)
        (1001, 0)
        (0110, 0.646)
        (0101, 0.081)
        (0011, 0.514)
        (1110, 0.363)
        (1101, 0)
        (1011, 0)
        (0111, 0.282)
        (1111, 0)
    };

    \end{axis}
\end{tikzpicture}
\end{subfigure}

\begin{subfigure}{.475\textwidth}
\begin{tikzpicture}[scale=0.7]
    \begin{axis}[
    title={\large{$N=5$}},
        width=12cm, 
        height=8cm, 
        ylabel={Probability},
        ymin=0, ymax=1,
        xtick=data,
        symbolic x coords={1000, 0100, 0010, 0001, 1100, 1010, 1001, 0110, 0101, 0011, 1110, 1101, 1011, 0111, 1111},
                xticklabels={\textbf{1000}, 0100, 0010, \textbf{0001}, \textbf{1100}, 1010, \textit{1001}, 0110, \textit{0101}, 0011, \textbf{1110}, \textit{1101}, \textbf{1011}, 0111, \textit{1111}},
        ytick={0, 0.2, 0.4, 0.6, 0.8, 1.0},
        xticklabel style={rotate=90, anchor=east},
        grid=both,
        grid style = {dashed}
    ]
\addplot[
        only marks,
        mark= x,
        color=darkgreen,line width=3pt,
        mark size=7.5pt
    ] coordinates {
        (1000, 0.227)
        (0100, 0.826)
        (0010, 0.645)
        (0001, 0.317)
        (1100, 0.214)
        (1010, 0.422)
        (1001, 0.159)
        (0110, 0.651)
        (0101, 0.156)
        (0011, 0.515)
        (1110, 0.414)
        (1101, 0.159)
        (1011, 0.289)
        (0111, 0.507)
        (1111, 0.099)
    };

\addplot[
	only marks,
	fill = red,
	mark=triangle,line width=1.5pt,
	color=red,
	mark size=6pt
	] coordinates {
        (1000, 0.483)
        (0100, 0.937)
        (0010, 0.839)
        (0001, 0.517)
        (1100, 0.458)
        (1010, 0.66)
        (1001, 0.351)
        (0110, 0.842)
        (0101, 0.222)
        (0011, 0.75)
        (1110, 0.7)
        (1101, 0.351)
        (1011, 0.47)
        (0111, 0.754)
        (1111, 0.367)
        
    };

\addplot[
        only marks,
        mark=triangle*,
        color=red,
        mark size=6pt
    ] coordinates {
        (1000, 0.231)
        (0100, 0.825)
        (0010, 0.64)
        (0001, 0.317)
        (1100, 0.218)
        (1010, 0.418)
        (1001, 0.16)
        (0110, 0.646)
        (0101, 0.156)
        (0011, 0.514)
        (1110, 0.427)
        (1101, 0.16)
        (1011, 0.286)
        (0111, 0.516)
        (1111, 0.092)
        
    };


\addplot[
        only marks,
        mark= + ,
        color=darkgreen,line width=3pt,
        mark size = 7.5pt
    ] coordinates {
        (1000, 0.641)
        (0100, 0.937)
        (0010, 0.843)
        (0001, 0.548)
        (1100, 0.61)
        (1010, 0.665)
        (1001, 0.571)
        (0110, 0.847)
        (0101, 0.541)
        (0011, 0.757)
        (1110, 0.706)
        (1101, 0.57)
        (1011, 0.506)
        (0111, 0.758)
        (1111, 0.566)
        
    };

\addplot[
        only marks,
        fill = blue,
        mark=o,line width=1.5pt,
        color=blue,
        mark size = 3.5pt
    ] coordinates {
        (1000, 0.627)
        (0100, 0.937)
        (0010, 0.839)
        (0001, 0.545)
        (1100, 0.595)
        (1010, 0.66)
        (1001, 0.56)
        (0110, 0.842)
        (0101, 0.529)
        (0011, 0.75)
        (1110, 0.706)
        (1101, 0.56)
        (1011, 0.529)
        (0111, 0.754)
        (1111, 0.556)
    };

\addplot[
        only marks,
        mark=*,
        color=blue,
        fill = blue,
        mark size = 3.5pt
    ] coordinates {
        (1000, 0)
        (0100, 0.825)
        (0010, 0.64)
        (0001, 0)
        (1100, 0)
        (1010, 0.418)
        (1001, 0)
        (0110, 0.646)
        (0101, 0.081)
        (0011, 0.514)
        (1110, 0.393)
        (1101, 0)
        (1011, 0)
        (0111, 0.516)
        (1111, 0)
    };


    \end{axis}
\end{tikzpicture}

\end{subfigure}
\begin{subfigure}{.5\textwidth}
\begin{tikzpicture}[scale=0.7]
    \begin{axis}[
    title={\large{$N=6$}},
        width=12cm, 
        height=8cm, 
        ymin=0, ymax=1,
        xtick=data,
        symbolic x coords={1000, 0100, 0010, 0001, 1100, 1010, 1001, 0110, 0101, 0011, 1110, 1101, 1011, 0111, 1111},
                xticklabels={\textbf{1000}, 0100, 0010, \textbf{0001}, \textbf{1100}, 1010, \textbf{1001}, 0110, \textbf{0101}, 0011, \textbf{1110}, \textit{1101}, \textit{1011}, 0111, \textit{1111}},
        ytick={0, 0.2, 0.4, 0.6, 0.8, 1.0},
        xticklabel style={rotate=90, anchor=east},
        grid=both,
        grid style = {dashed}
    ]
    
    \addplot[
        only marks,
        mark= x,
        color=darkgreen,line width=3pt,
        mark size=7.5pt
    ] coordinates {
        (1000, 0.227)
        (0100, 0.826)
        (0010, 0.645)
        (0001, 0.317)
        (1100, 0.214)
        (1010, 0.485)
        (1001, 0.192)
        (0110, 0.651)
        (0101, 0.318)
        (0011, 0.565)
        (1110, 0.414)
        (1101, 0.159)
        (1011, 0.289)
        (0111, 0.507)
        (1111, 0.159)
        
    };

    \addplot[
        only marks,
        fill = red,
        mark=triangle,line width=1.5pt,
        color=red,
        mark size=6pt
    ] coordinates {
        (1000, 0.483)
        (0100, 0.937)
        (0010, 0.839)
        (0001, 0.517)
        (1100, 0.458)
        (1010, 0.717)
        (1001, 0.453)
        (0110, 0.842)
        (0101, 0.517)
        (0011, 0.774)
        (1110, 0.7)
        (1101, 0.351)
        (1011, 0.47)
        (0111, 0.754)
        (1111, 0.35)
        
    };

    \addplot[
        only marks,
        mark=triangle*,
        color=red,
        mark size=6pt
    ] coordinates {
        (1000, 0.231)
        (0100, 0.825)
        (0010, 0.64)
        (0001, 0.317)
        (1100, 0.218)
        (1010, 0.484)
        (1001, 0.186)
        (0110, 0.646)
        (0101, 0.319)
        (0011, 0.562)
        (1110, 0.427)
        (1101, 0.16)
        (1011, 0.286)
        (0111, 0.516)
        (1111, 0.16)
        
    };

        \addplot[
        only marks,
        mark= + ,
        color=darkgreen,line width=3pt,
        mark size = 7.5pt
    ] coordinates {
        (1000, 0.641)
        (0100, 0.937)
        (0010, 0.843)
        (0001, 0.6)
        (1100, 0.61)
        (1010, 0.721)
        (1001, 0.571)
        (0110, 0.847)
        (0101, 0.543)
        (0011, 0.779)
        (1110, 0.706)
        (1101, 0.57)
        (1011, 0.59)
        (0111, 0.758)
        (1111, 0.566)

    };

    \addplot[
        only marks,
        fill = blue,
        mark=o,line width=1.5pt,
        color=blue,
        mark size = 3.5pt
    ] coordinates {
        (1000, 0.627)
        (0100, 0.937)
        (0010, 0.839)
        (0001, 0.593)
        (1100, 0.595)
        (1010, 0.717)
        (1001, 0.56)
        (0110, 0.842)
        (0101, 0.54)
        (0011, 0.774)
        (1110, 0.706)
        (1101, 0.56)
        (1011, 0.592)
        (0111, 0.754)
        (1111, 0.556)
        
    };

    \addplot[
        only marks,
        mark=*,
        color=blue,
        fill = blue,
        mark size = 3.5pt
    ] coordinates {
        (1000, 0)
        (0100, 0.825)
        (0010, 0.64)
        (0001, 0.058)
        (1100, 0)
        (1010, 0.484)
        (1001, 0)
        (0110, 0.646)
        (0101, 0)
        (0011, 0.562)
        (1110, 0.393)
        (1101, 0)
        (1011, 0.261)
        (0111, 0.516)
        (1111, 0)
        
    };
    \end{axis}
\end{tikzpicture}
\end{subfigure}

\begin{subfigure}{.475\textwidth}
\begin{tikzpicture}[scale=0.7]
    \begin{axis}[
    title={\large{$N=7$}},
        width=12cm, 
        height=8cm, 
         xlabel={Initial Genotype},
        ylabel={Probability},
        ymin=0, ymax=1,
        xtick=data,
        symbolic x coords={1000, 0100, 0010, 0001, 1100, 1010, 1001, 0110, 0101, 0011, 1110, 1101, 1011, 0111, 1111},
                xticklabels={\textbf{1000}, 0100, 0010, \textbf{0001}, \textbf{1100}, 1010, \textbf{1001}, 0110, \textbf{0101}, 0011, \textbf{1110}, \textit{1101}, \textit{1011}, 0111, \textit{1111}},
        ytick={0, 0.2, 0.4, 0.6, 0.8, 1.0},
        xticklabel style={rotate=90, anchor=east},
        grid=both,
        grid style = {dashed}
    ]
\addplot[
        only marks,
        mark= x,
        color=darkgreen,line width=3pt,
        mark size=7.5pt
    ] coordinates {
(1000, 0.227)
(0100, 0.826)
(0010, 0.645)
(0001, 0.351)
(1100, 0.214)
(1010, 0.485)
(1001, 0.232)
(0110, 0.651)
(0101, 0.318)
(0011, 0.565)
(1110, 0.439)
(1101, 0.159)
(1011, 0.292)
(0111, 0.513)
(1111, 0.159)

    };

\addplot[
	only marks,
	fill = red,
	mark=triangle,line width=1.5pt,
	color=red,
	mark size=6pt
	] coordinates {
(1000, 0.483)
(0100, 0.937)
(0010, 0.839)
(0001, 0.566)
(1100, 0.458)
(1010, 0.717)
(1001, 0.452)
(0110, 0.842)
(0101, 0.517)
(0011, 0.774)
(1110, 0.714)
(1101, 0.351)
(1011, 0.516)
(0111, 0.761)
(1111, 0.35)

    };

\addplot[
        only marks,
        mark=triangle*,
        color=red,
        mark size=6pt
    ] coordinates {
(1000, 0.231)
(0100, 0.825)
(0010, 0.64)
(0001, 0.353)
(1100, 0.218)
(1010, 0.484)
(1001, 0.222)
(0110, 0.646)
(0101, 0.319)
(0011, 0.562)
(1110, 0.447)
(1101, 0.16)
(1011, 0.296)
(0111, 0.522)
(1111, 0.16)

    };


\addplot[
        only marks,
        mark= + ,
        color=darkgreen,line width=3pt,
        mark size = 7.5pt
    ] coordinates {
(1000, 0.641)
(0100, 0.937)
(0010, 0.843)
(0001, 0.616)
(1100, 0.61)
(1010, 0.721)
(1001, 0.571)
(0110, 0.847)
(0101, 0.594)
(0011, 0.779)
(1110, 0.708)
(1101, 0.57)
(1011, 0.608)
(0111, 0.764)
(1111, 0.566)

    };

\addplot[
        only marks,
        fill = blue,
        mark=o,line width=1.5pt,
        color=blue,
        mark size = 3.5pt
    ] coordinates {
(1000, 0.627)
(0100, 0.937)
(0010, 0.839)
(0001, 0.607)
(1100, 0.595)
(1010, 0.717)
(1001, 0.56)
(0110, 0.842)
(0101, 0.587)
(0011, 0.774)
(1110, 0.709)
(1101, 0.56)
(1011, 0.604)
(0111, 0.761)
(1111, 0.556)

    };

\addplot[
        only marks,
        mark=*,
        color=blue,
        fill = blue,
        mark size = 3.5pt
    ] coordinates {
(1000, 0)
(0100, 0.825)
(0010, 0.64)
(0001, 0.055)
(1100, 0)
(1010, 0.484)
(1001, 0)
(0110, 0.646)
(0101, 0.061)
(0011, 0.562)
(1110, 0.392)
(1101, 0)
(1011, 0.222)
(0111, 0.522)
(1111, 0)

    };


    \end{axis}
\end{tikzpicture}

\end{subfigure}
\begin{subfigure}{.5\textwidth}
\begin{tikzpicture}[scale=0.7]
    \begin{axis}[
    title={\large{$N=8$}},
        width=12cm, 
        height=8cm, 
         xlabel={Initial Genotype},
        ymin=0, ymax=1,
        xtick=data,
        symbolic x coords={1000, 0100, 0010, 0001, 1100, 1010, 1001, 0110, 0101, 0011, 1110, 1101, 1011, 0111, 1111},
                xticklabels={\textbf{1000}, 0100, 0010, \textbf{0001}, \textbf{1100}, 1010, \textbf{1001}, 0110, \textbf{0101}, 0011, \textbf{1110}, \textit{1101}, \textit{1011}, \textbf{0111}, \textit{1111}},
        ytick={0, 0.2, 0.4, 0.6, 0.8, 1.0},
        xticklabel style={rotate=90, anchor=east},
        grid=both,
        grid style = {dashed}
    ]
    
    \addplot[
        only marks,
        mark= x,
        color=darkgreen,line width=3pt,
        mark size=7.5pt
    ] coordinates {
(1000, 0.227)
(0100, 0.826)
(0010, 0.645)
(0001, 0.351)
(1100, 0.214)
(1010, 0.495)
(1001, 0.254)
(0110, 0.651)
(0101, 0.351)
(0011, 0.568)
(1110, 0.439)
(1101, 0.159)
(1011, 0.301)
(0111, 0.513)
(1111, 0.161)

    };

    \addplot[
        only marks,
        fill = red,
        mark=triangle,line width=1.5pt,
        color=red,
        mark size=6pt
    ] coordinates {
(1000, 0.483)
(0100, 0.937)
(0010, 0.839)
(0001, 0.562)
(1100, 0.458)
(1010, 0.727)
(1001, 0.519)
(0110, 0.842)
(0101, 0.564)
(0011, 0.779)
(1110, 0.714)
(1101, 0.351)
(1011, 0.489)
(0111, 0.761)
(1111, 0.344)

    };

    \addplot[
        only marks,
        mark=triangle*,
        color=red,
        mark size=6pt
    ] coordinates {
(1000, 0.231)
(0100, 0.825)
(0010, 0.64)
(0001, 0.352)
(1100, 0.218)
(1010, 0.498)
(1001, 0.239)
(0110, 0.646)
(0101, 0.353)
(0011, 0.57)
(1110, 0.447)
(1101, 0.16)
(1011, 0.297)
(0111, 0.522)
(1111, 0.159)

    };

        \addplot[
        only marks,
        mark= + ,
        color=darkgreen,line width=3pt,
        mark size = 7.5pt
    ] coordinates {
(1000, 0.641)
(0100, 0.937)
(0010, 0.843)
(0001, 0.616)
(1100, 0.61)
(1010, 0.73)
(1001, 0.571)
(0110, 0.847)
(0101, 0.61)
(0011, 0.784)
(1110, 0.712)
(1101, 0.57)
(1011, 0.639)
(0111, 0.764)
(1111, 0.566)

    };

    \addplot[
        only marks,
        fill = blue,
        mark=o,line width=1.5pt,
        color=blue,
        mark size = 3.5pt
    ] coordinates {
(1000, 0.627)
(0100, 0.937)
(0010, 0.839)
(0001, 0.607)
(1100, 0.595)
(1010, 0.727)
(1001, 0.56)
(0110, 0.842)
(0101, 0.602)
(0011, 0.779)
(1110, 0.711)
(1101, 0.56)
(1011, 0.637)
(0111, 0.762)
(1111, 0.556)

    };

    \addplot[
        only marks,
        mark=*,
        color=blue,
        fill = blue,
        mark size = 3.5pt
    ] coordinates {
(1000, 0)
(0100, 0.825)
(0010, 0.64)
(0001, 0.055)
(1100, 0)
(1010, 0.498)
(1001, 0)
(0110, 0.646)
(0101, 0.057)
(0011, 0.57)
(1110, 0.35)
(1101, 0)
(1011, 0.295)
(0111, 0.476)
(1111, 0)

    };
    \end{axis}
\end{tikzpicture}
\end{subfigure}

\end{figure}
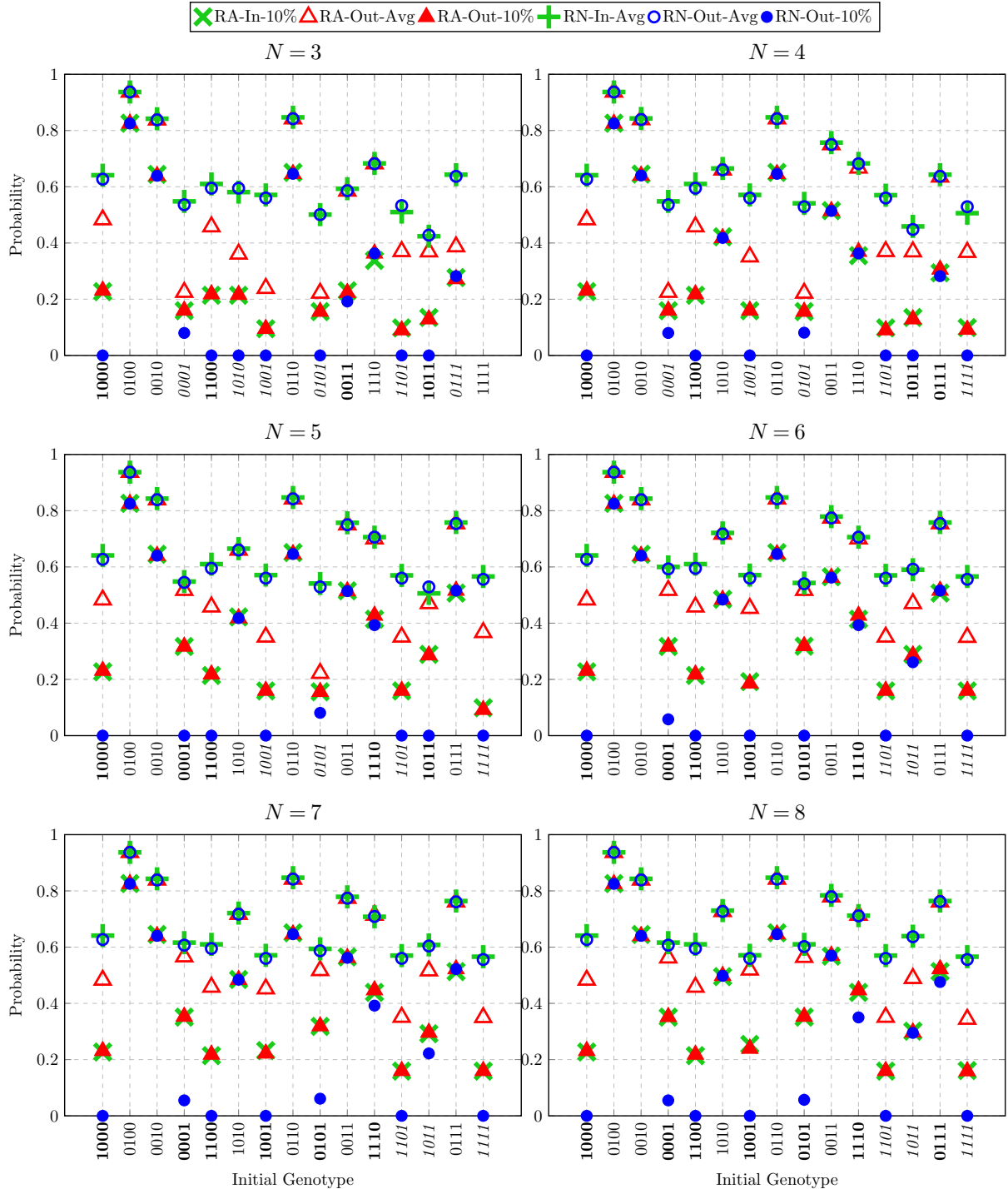

We have several observations from our analysis: Firstly, our   approach is quite reliable as the in-sample and out-of-sample performances under both risk-averse and risk-neutral objectives are close to each other. In fact, RA-In-10\% values, symbolized with a cross sign, and RA-Out-10\% values, symbolized with a filled triangle, almost always coincide (similarly, RN-In-Avg values, symbolized with a plus sign, and RN-Out-Avg values, symbolized with an empty circle, coincide as well). Therefore,  our results suggest that the choice of the scenario size as $|\mathcal{H}|=2000$ is reasonable.

Secondly, the results illustrate an expected trade-off between risk-averse and risk-neutral objectives. Clearly, the risk-averse solution has a better RA-Out-10\% value compared to the RN-Out-10\% value obtained from the risk-neutral solution. However, this comes with a sacrifice from the average performance as RN-Out-Avg values are typically larger than  RA-Out-Avg values. Depending on how this trade-off plays out, we roughly divide the genotypes into three groups:
\begin{itemize}
    \item Group 1: These are \textit{indifferent} genotypes for which risk-averse and risk-neutral solutions are indistinguishable (e.g., genotypes 0100, 0010, 0110, 1110 for $N=3$).
    \item Group 2: These are \textit{good} genotypes for which the increase in the worst 10\% performance is higher than the decrease in the average performance when switched from the risk-neutral solution to the risk-averse solution  (e.g., genotypes 1000, 1100, 0011, 1011 for $N=3$). The labels of these genotypes are  boldface.
    \item Group 3: These are \textit{bad} genotypes for which the increase in the worst 10\% performance is lower than the decrease in the average performance when switched from the risk-neutral solution to the risk-averse solution  (e.g., genotypes 0001, 1010, 1001, 0101, 1101, 0111 for $N=3$). The labels of these genotypes are italicized.
\end{itemize}












We notice   that the number of \textit{bad} (resp. \textit{good}) genotypes  decreases (resp. increases)  consistently as $N$ increases.
%
For $N=4$, the number of \textit{good} and \textit{bad} genotypes are four and five, respectively. 
Compared to the risk-neutral approach, solving the problem in a risk-averse manner improves the worst 10\% performance by 7.3\%, computed as the difference of RA-Out-10\% and RN-Out-10\%, on average whereas it decreases the average performance by 10.3\%, computed as the difference of RA-Out-Avg and RN-Out-Avg.
Going up to $N=6$, the number of \textit{good} and \textit{bad} genotypes become six and three, respectively. Compared to the risk-neutral solution, the risk-averse solution improves the worst 10\% performance by 10.6\% on average while decreasing the average performance by 6.8\%. 
Finally, solving the problem for $N=8$, we see that the number of \textit{good} genotypes increases to seven while the number of \textit{bad} genotypes is still three. The risk-averse solution is better than the risk-neutral solution by 11.6\% in terms of the worst 10\% performance, which comes with a 6.5\% sacrifice from the average performance.

As illustrated above, switching from the risk-neutral solution to the risk-averse solution comes with a significant gain from the worst 10\% performance albeit a considerable sacrifice from the average performance in most cases even for \textit{good} genotypes. 
These observations illustrate the price of risk-aversion in the static version. 
However, as $N$ increases, the aforementioned gain increases and the sacrifice decreases on average.

We remark that we do not solve the problem for larger $N$ values since it would require too much computational effort whereas the potential benefit is projected to be small (notice that the objective functions values seem to stabilize for the larger $N$ values 
reported in Figure~\ref{fig:riskneutral vs riskaverse static}).

\subsection{Dynamic Version}\label{subsec:dynamicComp}

In this section, we present the results for the dynamic version of the problem. We note that as this approach allows for intermediate observations, we can take more informed steps, and, therefore, the success of the dynamic version will be higher compared to the success of  the static version in terms of reversing the antibiotic resistance.

\subsubsection{Computational Effort}

 
We run Algorithm~\ref{alg:dynamicRiskAverseAlgo} with the enhancements introduced in Section~\ref{sec:improvement-dynamic} for $N=3,\dots,8$, and report the results in Table~\ref{tab:risk-averseN=3-8dynamic}, whose format is similar to that of Table~\ref{tab:risk-averseN=5-8static}.

\begin{table}[H]
\centering
\small
\caption{Computational effort to solve the risk-averse dynamic version for $N=3,\dots,8$ with different filtering mechanisms.}
\label{tab:risk-averseN=3-8dynamic}
\begin{tabular}{|c|cr|cr|cr|cr|cr|cr|}
\hline
  & \multicolumn{2}{c|}{\textbf{$N=3$}} & \multicolumn{2}{c|}{\textbf{$N=4$}} & \multicolumn{2}{c|}{\textbf{$N=5$ - F}} & \multicolumn{2}{c|}{\textbf{$N=6$ - F}} & \multicolumn{2}{c|}{\textbf{$N=7$ - F}} & \multicolumn{2}{c|}{\textbf{$N=8$ - F}} \\ \hline
\textbf{$\bm{i_I}$} & \multicolumn{1}{r}{\textbf{\#/\%}} & \textbf{Time} & \multicolumn{1}{r}{\textbf{\#/\%}} & \textbf{Time} & \multicolumn{1}{r}{\textbf{\#/\%}} & \textbf{Time} & \multicolumn{1}{r}{\textbf{\#/\%}} & \textbf{Time} & \multicolumn{1}{r}{\textbf{\#/\%}} & \textbf{Time} & \multicolumn{1}{r}{\textbf{\#/\%}} & \textbf{Time} \\ \hline
\textbf{1000} & \textit{0.02} & 466 & \textit{0.02} & 613 & \textit{0.01}& 640
& \textit{0.02} & 819 & \textit{0.03} & 1122 & \textit{0.03} & 1181 \\
\textbf{0100} & \textit{0.02} & 923 & \textit{0.02} & 1459 & 1& 121
& 1 & 158 & 1 & 211 & 1 & 236 \\
\textbf{0010} & 2 & 184 & 2 & 231 & 1& 120
& 1 & 148 & 1 & 180 & 1 & 204 \\
\textbf{0001} & \textit{0.03} & 914 & \textit{0.03} & 1368 & \textit{0.01}& 656
& 4 & 668 & \textit{0.01} & 1115 & \textit{0.01} & 1161 \\
\textbf{1100} & 3 & 241 & \textit{0.04} & 5380 & \textit{0.03}& 642
& \textit{0.04} & 979 & \textit{0.04} & 1216 & \textit{0.04} & 1442 \\
\textbf{1010} & 1 & 83 & \textit{0.03} & 9369 & 1& 125
& \textit{0.02} & 1032 & \textit{0.02} & 1285 & \textit{0.02} & 1564 \\
\textbf{1001} & 2 & 173 & \textit{0.03} & 18593 & \textit{0.03}& 645
& \textit{0.03} & 932 & \textit{0.03} & 1161 & \textit{0.03} & 1333 \\
\textbf{0110} & 1 & 80 & \textit{0.01} & 1999 & 1& 119
& 1 & 162 & 1 & 190 & 1 & 234 \\
\textbf{0101} & 2 & 171 & \textit{0.05} & 15262 & \textit{0.02}& 634
& \textit{0.03} & 1008 & \textit{0.03} & 1232 & 2 & 547 \\
\textbf{0011} & 2 & 165 & \textit{0.03} & 1877 & 1& 123
& \textit{0.02} & 848 & \textit{0.02} & 1018 & \textit{0.02} & 1103 \\
\textbf{1110} & \textit{0.03} & 812 & \textit{0.03} & 1406 & \textit{0.01}& 724
& \textit{0.02} & 1114 & \textit{0.05} & 1569 & \textit{0.05} & 1845 \\
\textbf{1101} & \textit{0.05} & 840 & \textit{0.05} & 1690 & \textit{0.03}& 668
& \textit{0.06} & 993 & \textit{0.05} & 1323 & \textit{0.05} & 1439 \\
\textbf{1011} & \textit{0.03} & 821 & \textit{0.03} & 1233 & \textit{0.02}& 728
& \textit{0.02} & 904 & \textit{0.02} & 1246 & \textit{0.02} & 1287 \\
\textbf{0111} & 4 & 589 & 4 & 812 & 1& 166
& \textit{0.03} & 937 & \textit{0.04} & 1342 & \textit{0.04} & 1436 \\
\textbf{1111} & - & - & \textit{0.06} & 53206 & \textit{0.02}& 787
& \textit{0.07} & 1105 & \textit{0.07} & 1381 & \textit{0.06} & 1600 \\ \hline
\textbf{Avg} & 3.4 & 461 & 4.7 & 7633 & 3.4& 459& 4.1 & 787 & 4.2 & 1039 & 4.0 & 1107 \\ \hline
\end{tabular}
\end{table}

The dynamic version turns out to be more challenging that the static version due to slower progress on the upper bound, therefore, we frequently reach the iteration limit of $\tau$ even for small values of $N=3,4$. In addition, solving the subproblems becomes time consuming for larger values of $N$ as well,  therefore, we introduce another filtering mechanism:  We first solve the risk-averse dynamic version for $N=4$ and record the solutions as $ \bm{y}^{[\bm{i_I}]} $   when the initial genotype is $\bm{i_I}$.  
Then, if an antibiotic $k$ is used when the stochastic process is at genotype $i$ for \textit{any} initial genotype, we limit the set of available antibiotics for genotype $i$ to be $K_i :=   \{k = 1,\dots,K: \ y_{k,i}^{[\bm{i_I}]} = 1  \text{ for some }  \bm{i_I}\}$ except the genotype $1111$, for which we do not apply any filter. 
This filtering rule limits the antibiotics to be used and decreases the solution time of the MILP significantly. 
The results reported in Table~\ref{tab:risk-averseN=3-8dynamic}
show that although Algorithm~\ref{alg:dynamicRiskAverseAlgo} reaches the iteration limit in most cases, the gap obtained upon termination is generally small, ranging between 0.01-0.07. 

\subsubsection{Comparison of Risk-Averse and Risk-Neutral Solutions} 

Similar to Figure~\ref{fig:riskneutral vs riskaverse static}, we now compare   risk-averse and risk-neutral solutions for the dynamic version in   Figure~\ref{fig:riskneutral vs riskaverse dynamic},  for each initial genotype and $N=3,\dots,8$. A significant difference between  the results of static and dynamic versions is that both average and worst 10\% performances are much higher when intermediate observations are allowed.

As before, the results suggest that 
  our   approach is again  reliable as the in-sample and out-of-sample performances under both risk-averse and risk-neutral objectives are close to each other. 
  Next, we will discuss the trade-off between risk-averse and risk-neutral objectives. Using the same terminology, we observe that the number of \textit{good}  genotypes  increases and the number of \textit{bad}  genotypes  decreases  rapidly as $N$ increases. 








For   $N=4$,  we observe  that  nine genotypes are \textit{good} and only one genotype is \textit{bad}. Compared to the risk-neutral solution, the risk-averse solution improves the worst 10\% performance by 14.0\%  while losing 5.9\% from the average performance. Moving up to $N=6$, the number of \textit{good} genotypes increases to ten while the number of \textit{bad} genotypes stays the same. The risk-averse solution performs 14.2\% better in the worst 10\% performance with  only  3.1\% sacrifice from the average performance. Finally, at $N=8$, we have  ten \textit{good} and zero \textit{bad} genotypes. The risk-averse solution has a 17.2\% better worst 10\% performance compared to the risk-neutral solution with only 1.8\% loss from the average performance. Therefore, we can say that as $N$ increases, the benefit of solving the problem in our proposed risk-averse approach increases considerably.




To summarize,  when switched from the risk-neutral solution to the risk-averse solution, we observe significantly better worst 10\% performance with relatively small reductions from the average performance. 
These observations suggest that the price of risk-aversion is quite low in the dynamic version, and our proposed approach is arguably more compatible with this version.

\begin{figure}[h]
\caption{Comparison of risk-averse and risk-neutral solutions for the dynamic version.}
\label{fig:riskneutral vs riskaverse dynamic}

\begin{subfigure}{.475\textwidth}
\begin{tikzpicture}[scale=0.7]
    \begin{axis}[
    title={\large{$N=3$}},
        width=12cm, 
        height=8cm, 
        ylabel={Probability},
        ymin=0, ymax=1,
        xtick=data,
        symbolic x coords={1000, 0100, 0010, 0001, 1100, 1010, 1001, 0110, 0101, 0011, 1110, 1101, 1011, 0111, 1111},
                xticklabels={\textbf{1000}, 0100, 0010, \textbf{0001}, \textbf{1100}, \textbf{1010}, \textit{1001}, 0110, \textit{0101}, 0011, \textbf{1110}, \textit{1101}, \textbf{1011}, 0111, 1111},
        ytick={0, 0.2, 0.4, 0.6, 0.8, 1.0},
        xticklabel style={rotate=90, anchor=east},
legend style={at={(1.03,1.15)}, anchor=south,legend columns=6},        grid=both,
        grid style = {dashed}
    ]

\addplot[
        only marks,
        mark= x,
        color=darkgreen,line width=3pt,
        mark size=7.5pt
    ] coordinates {
        (1000, 0.405)
        (0100, 0.901)
        (0010, 0.768)
        (0001, 0.412)
        (1100, 0.224)
        (1010, 0.339)
        (1001, 0.129)
        (0110, 0.725)
        (0101, 0.158)
        (0011, 0.233)
        (1110, 0.49)
        (1101, 0.146)
        (1011, 0.253)
        (0111, 0.53)

                    (1111, 10)

    };
    \addlegendentry{RA-In-10\%  }

\addplot[
	only marks,
	fill = red,
	mark=triangle,line width=1.5pt,
	color=red,
	mark size=6pt
	] coordinates {
        (1000, 0.688)
        (0100, 0.967)
        (0010, 0.939)
        (0001, 0.635)
        (1100, 0.399)
        (1010, 0.562)
        (1001, 0.268)
        (0110, 0.876)
        (0101, 0.224)
        (0011, 0.586)
        (1110, 0.737)
        (1101, 0.285)
        (1011, 0.51)
        (0111, 0.773)

               (1111, 10)

    };
    \addlegendentry{RA-Out-Avg}

\addplot[
        only marks,
        mark=triangle*,
        color=red,
        mark size=6pt
    ] coordinates {
(1000, 0.404)
(0100, 0.901)
(0010, 0.787)
(0001, 0.425)
(1100, 0.234)
(1010, 0.346)
(1001, 0.127)
(0110, 0.724)
(0101, 0.159)
(0011, 0.227)
(1110, 0.521)
(1101, 0.149)
(1011, 0.263)
(0111, 0.533)
               (1111, 10)

    };
    \addlegendentry{RA-Out-10\%}


\addplot[
        only marks,
        mark= + ,
        color=darkgreen,line width=3pt,
        mark size = 7.5pt
    ]
	 coordinates {
    (1000, 0.704)
    (0100, 0.971)
    (0010, 0.937)
    (0001, 0.714)
    (1100, 0.582)
    (1010, 0.628)
    (1001, 0.537)
    (0110, 0.877)
    (0101, 0.504)
    (0011, 0.593)
    (1110, 0.754)
    (1101, 0.536)
    (1011, 0.565)
    (0111, 0.777)
                 (1111, 10)

    };
    \addlegendentry{RN-In-Avg}

\addplot[
        only marks,
        fill = blue,
        mark=o,line width=1.5pt,
        color=blue,
        mark size = 3.5pt
    ] coordinates {
(1000, 0.721)
(0100, 0.972)
(0010, 0.939)
(0001, 0.718)
(1100, 0.595)
(1010, 0.632)
(1001, 0.551)
(0110, 0.876)
(0101, 0.508)
(0011, 0.586)
(1110, 0.759)
(1101, 0.551)
(1011, 0.569)
(0111, 0.773)

        (1111, 10)

    };
\addlegendentry{RN-Out-Avg}

\addplot[
        only marks,
        mark=*,
        color=blue,
        fill = blue,
        mark size = 3.5pt
    ] coordinates {
(1000, 0)
(0100, 0.893)
(0010, 0.788)
(0001, 0.199)
(1100, 0)
(1010, 0.167)
(1001, 0)
(0110, 0.724)
(0101, 0.044)
(0011, 0.227)
(1110, 0.44)
(1101, 0)
(1011, 0.162)
(0111, 0.533)

        (1111, 10)
    };
\addlegendentry{RN-Out-10\%}

 
\addplot[
        only marks,
        mark= x,
        color=white,line width=3pt,
        mark size=7.5pt
    ] coordinates {
        (1000, 10)
        (0100, 10.893)
        (0010, 10.788)
        (0001, 10.199)
        (1100, 10)
        (1010, 10.167)
        (1001, 10)
        (0110, 10.724)
        (0101, 10.044)
        (0011, 10.227)
        (1110, 10.44)
        (1101, 10)
        (1011, 10.162)
        (0111, 10.533)
        (1111, 0.3)
        };

    \end{axis}
\end{tikzpicture}

\end{subfigure}
\begin{subfigure}{.5\textwidth}
\begin{tikzpicture}[scale=0.7]
    \begin{axis}[
    title={\large{$N=4$}},
        width=12cm, 
        height=8cm, 
        ymin=0, ymax=1,
        xtick=data,
        symbolic x coords={1000, 0100, 0010, 0001, 1100, 1010, 1001, 0110, 0101, 0011, 1110, 1101, 1011, 0111, 1111},
                xticklabels={\textbf{1000}, 0100, 0010, \textbf{0001}, \textbf{1100}, \textbf{1010}, \textbf{1001}, 0110, \textbf{0101}, 0011, \textbf{1110}, \textit{1101}, \textbf{1011}, 0111, \textbf{1111}},
        ytick={0, 0.2, 0.4, 0.6, 0.8, 1.0},
        xticklabel style={rotate=90, anchor=east},
legend style={at={(10.25,1.1)}, anchor=south,legend columns=-1},        grid=both,
        grid style = {dashed}
    ]

    \addplot[ 
        only marks,
        mark= x,
        color=white,line width=2pt,
        mark size=7.5pt
    ] coordinates {
        (1000, 10.227)
        (0100, 10.826)
        (0010, 10.645)
        (0001, 10.159)
        (1100, 10.214)
        (1010, 10.422)
        (1001, 10.159)
        (0110, 10.651)
        (0101, 10.156)
        (0011, 10.515)
        (1110, 10.355)
        (1101, 10.098)
        (1011, 10.135)
        (0111, 10.294)
        (1111, 10.099)
    };
    \addlegendentry{ }
    
    \addplot[
        only marks,
        mark= x,
        color=darkgreen,line width=3pt,
        mark size=7.5pt
    ] coordinates {
        (1000, 0.405)
        (0100, 0.901)
        (0010, 0.768)
        (0001, 0.412)
        (1100, 0.421)
        (1010, 0.654)
        (1001, 0.332)
        (0110, 0.881)
        (0101, 0.432)
        (0011, 0.682)
        (1110, 0.49)
        (1101, 0.146)
        (1011, 0.253)
        (0111, 0.53)
        (1111, 0.313)
        
    };

    \addplot[
        only marks,
        fill = red,
        mark=triangle,line width=1.5pt,
        color=red,
        mark size=6pt
    ] coordinates {
        (1000, 0.688)
        (0100, 0.967)
        (0010, 0.939)
        (0001, 0.635)
        (1100, 0.643)
        (1010, 0.843)
        (1001, 0.541)
        (0110, 0.962)
        (0101, 0.649)
        (0011, 0.891)
        (1110, 0.737)
        (1101, 0.285)
        (1011, 0.51)
        (0111, 0.773)
        (1111, 0.548)

    };

    \addplot[
        only marks,
        mark=triangle*,
        color=red,
        mark size=6pt
    ] coordinates {
        (1000, 0.404)
        (0100, 0.901)
        (0010, 0.787)
        (0001, 0.425)
        (1100, 0.433)
        (1010, 0.669)
        (1001, 0.341)
        (0110, 0.888)
        (0101, 0.429)
        (0011, 0.693)
        (1110, 0.521)
        (1101, 0.149)
        (1011, 0.263)
        (0111, 0.533)
        (1111, 0.326)

    };

        \addplot[
        only marks,
        mark= + ,
        color=darkgreen,line width=3pt,
        mark size = 7.5pt
    ] coordinates {
        (1000, 0.704)
        (0100, 0.971)
        (0010, 0.937)
        (0001, 0.714)
        (1100, 0.712)
        (1010, 0.851)
        (1001, 0.645)
        (0110, 0.962)
        (0101, 0.742)
        (0011, 0.897)
        (1110, 0.754)
        (1101, 0.537)
        (1011, 0.565)
        (0111, 0.777)
        (1111, 0.631)

    };

    \addplot[
        only marks,
        fill = blue,
        mark=o,line width=1.5pt,
        color=blue,
        mark size = 3.5pt
    ] coordinates {
        (1000, 0.721)
        (0100, 0.972)
        (0010, 0.939)
        (0001, 0.718)
        (1100, 0.728)
        (1010, 0.854)
        (1001, 0.652)
        (0110, 0.963)
        (0101, 0.752)
        (0011, 0.897)
        (1110, 0.759)
        (1101, 0.551)
        (1011, 0.569)
        (0111, 0.773)
        (1111, 0.639)

    };

    \addplot[
        only marks,
        mark=*,
        color=blue,
        fill = blue,
        mark size = 3.5pt
    ] coordinates {
        (1000, 0)
        (0100, 0.893)
        (0010, 0.788)
        (0001, 0.199)
        (1100, 0.151)
        (1010, 0.606)
        (1001, 0.043)
        (0110, 0.883)
        (0101, 0.187)
        (0011, 0.687)
        (1110, 0.44)
        (1101, 0)
        (1011, 0.162)
        (0111, 0.533)
        (1111, 0.07)

    };

    \end{axis}
\end{tikzpicture}
\end{subfigure}

\begin{subfigure}{.475\textwidth}
\begin{tikzpicture}[scale=0.7]
    \begin{axis}[
    title={\large{$N=5$}},
        width=12cm, 
        height=8cm, 
        ylabel={Probability},
        ymin=0, ymax=1,
        xtick=data,
        symbolic x coords={1000, 0100, 0010, 0001, 1100, 1010, 1001, 0110, 0101, 0011, 1110, 1101, 1011, 0111, 1111},
                xticklabels={\textbf{1000}, 0100, 0010, \textbf{0001}, \textbf{1100}, \textbf{1010}, \textbf{1001}, 0110, \textbf{0101}, 0011, {1110}, \textbf{1101}, \textbf{1011}, \textit{0111}, \textbf{1111}},
        ytick={0, 0.2, 0.4, 0.6, 0.8, 1.0},
        xticklabel style={rotate=90, anchor=east},
        grid=both,
        grid style = {dashed}
    ]
\addplot[
        only marks,
        mark= x,
        color=darkgreen,line width=3pt,
        mark size=7.5pt
    ] coordinates {
        (1000, 0.544)
        (0100, 0.936)
        (0010, 0.882)
        (0001, 0.641)
        (1100, 0.421)
        (1010, 0.656)
        (1001, 0.332)
        (0110, 0.881)
        (0101, 0.433)
        (0011, 0.682)
        (1110, 0.674)
        (1101, 0.302)
        (1011, 0.563)
        (0111, 0.748)
        (1111, 0.316)

    };

\addplot[
	only marks,
	fill = red,
	mark=triangle,line width=1.5pt,
	color=red,
	mark size=6pt
	] coordinates {
        (1000, 0.755)
        (0100, 0.98)
        (0010, 0.979)
        (0001, 0.794)
        (1100, 0.643)
        (1010, 0.846)
        (1001, 0.541)
        (0110, 0.962)
        (0101, 0.645)
        (0011, 0.892)
        (1110, 0.882)
        (1101, 0.522)
        (1011, 0.743)
        (0111, 0.908)
        (1111, 0.558)

    };

\addplot[
        only marks,
        mark=triangle*,
        color=red,
        mark size=6pt
    ] coordinates {
        (1000, 0.543)
        (0100, 0.935)
        (0010, 0.901)
        (0001, 0.651)
        (1100, 0.433)
        (1010, 0.671)
        (1001, 0.341)
        (0110, 0.888)
        (0101, 0.437)
        (0011, 0.696)
        (1110, 0.714)
        (1101, 0.31)
        (1011, 0.571)
        (0111, 0.754)
        (1111, 0.332)

    };


\addplot[
        only marks,
        mark= + ,
        color=darkgreen,line width=3pt,
        mark size = 7.5pt
    ] coordinates {
        (1000, 0.8)
        (0100, 0.983)
        (0010, 0.978)
        (0001, 0.83)
        (1100, 0.713)
        (1010, 0.851)
        (1001, 0.645)
        (0110, 0.962)
        (0101, 0.742)
        (0011, 0.897)
        (1110, 0.879)
        (1101, 0.645)
        (1011, 0.784)
        (0111, 0.923)
        (1111, 0.632)

    };

\addplot[
        only marks,
        fill = blue,
        mark=o,line width=1.5pt,
        color=blue,
        mark size = 3.5pt
    ] coordinates {
        (1000, 0.807)
        (0100, 0.984)
        (0010, 0.98)
        (0001, 0.839)
        (1100, 0.728)
        (1010, 0.854)
        (1001, 0.652)
        (0110, 0.963)
        (0101, 0.752)
        (0011, 0.897)
        (1110, 0.883)
        (1101, 0.652)
        (1011, 0.794)
        (0111, 0.924)
        (1111, 0.639)

    };

\addplot[
        only marks,
        mark=*,
        color=blue,
        fill = blue,
        mark size = 3.5pt
    ] coordinates {
        (1000, 0.45)
        (0100, 0.931)
        (0010, 0.899)
        (0001, 0.299)
        (1100, 0.152)
        (1010, 0.606)
        (1001, 0.043)
        (0110, 0.879)
        (0101, 0.187)
        (0011, 0.687)
        (1110, 0.7)
        (1101, 0.043)
        (1011, 0.418)
        (0111, 0.749)
        (1111, 0.07)
    };


    \end{axis}
\end{tikzpicture}

\end{subfigure}
\begin{subfigure}{.5\textwidth}
\begin{tikzpicture}[scale=0.7]
    \begin{axis}[
    title={\large{$N=6$}},
        width=12cm, 
        height=8cm, 
        ymin=0, ymax=1,
        xtick=data,
        symbolic x coords={1000, 0100, 0010, 0001, 1100, 1010, 1001, 0110, 0101, 0011, 1110, 1101, 1011, 0111, 1111},
                xticklabels={\textbf{1000}, 0100, 0010, \textbf{0001}, \textbf{1100}, \textbf{1010}, \textbf{1001}, 0110, \textbf{0101}, \textbf{0011}, \textbf{1110}, {1101}, \textbf{1011}, \textit{0111}, \textbf{1111}},
        ytick={0, 0.2, 0.4, 0.6, 0.8, 1.0},
        xticklabel style={rotate=90, anchor=east},
        grid=both,
        grid style = {dashed}
    ]
    
    \addplot[
        only marks,
        mark= x,
        color=darkgreen,line width=3pt,
        mark size=7.5pt
    ] coordinates {
        (1000, 0.544)
        (0100, 0.936)
        (0010, 0.882)
        (0001, 0.641)
        (1100, 0.531)
        (1010, 0.791)
        (1001, 0.483)
        (0110, 0.924)
        (0101, 0.61)
        (0011, 0.816)
        (1110, 0.674)
        (1101, 0.302)
        (1011, 0.563)
        (0111, 0.748)
        (1111, 0.465)

    };

    \addplot[
        only marks,
        fill = red,
        mark=triangle,line width=1.5pt,
        color=red,
        mark size=6pt
    ] coordinates {
        (1000, 0.755)
        (0100, 0.98)
        (0010, 0.979)
        (0001, 0.794)
        (1100, 0.754)
        (1010, 0.931)
        (1001, 0.697)
        (0110, 0.98)
        (0101, 0.776)
        (0011, 0.956)
        (1110, 0.882)
        (1101, 0.522)
        (1011, 0.743)
        (0111, 0.908)
        (1111, 0.752)

    };

    \addplot[
        only marks,
        mark=triangle*,
        color=red,
        mark size=6pt
    ] coordinates {
        (1000, 0.543)
        (0100, 0.935)
        (0010, 0.901)
        (0001, 0.651)
        (1100, 0.535)
        (1010, 0.812)
        (1001, 0.481)
        (0110, 0.933)
        (0101, 0.617)
        (0011, 0.835)
        (1110, 0.714)
        (1101, 0.31)
        (1011, 0.571)
        (0111, 0.754)
        (1111, 0.493)
        
    };

        \addplot[
        only marks,
        mark= + ,
        color=darkgreen,line width=3pt,
        mark size = 7.5pt
    ] coordinates {
        (1000, 0.8)
        (0100, 0.983)
        (0010, 0.978)
        (0001, 0.83)
        (1100, 0.797)
        (1010, 0.928)
        (1001, 0.756)
        (0110, 0.982)
        (0101, 0.807)
        (0011, 0.956)
        (1110, 0.879)
        (1101, 0.645)
        (1011, 0.784)
        (0111, 0.923)
        (1111, 0.765)

    };

    \addplot[
        only marks,
        fill = blue,
        mark=o,line width=1.5pt,
        color=blue,
        mark size = 3.5pt
    ] coordinates {
        (1000, 0.807)
        (0100, 0.984)
        (0010, 0.98)
        (0001, 0.839)
        (1100, 0.803)
        (1010, 0.93)
        (1001, 0.762)
        (0110, 0.983)
        (0101, 0.822)
        (0011, 0.958)
        (1110, 0.883)
        (1101, 0.652)
        (1011, 0.794)
        (0111, 0.924)
        (1111, 0.763)

    };

    \addplot[
        only marks,
        mark=*,
        color=blue,
        fill = blue,
        mark size = 3.5pt
    ] coordinates {
        (1000, 0.45)
        (0100, 0.931)
        (0010, 0.899)
        (0001, 0.299)
        (1100, 0.45)
        (1010, 0.783)
        (1001, 0.052)
        (0110, 0.922)
        (0101, 0.24)
        (0011, 0.817)
        (1110, 0.7)
        (1101, 0.043)
        (1011, 0.418)
        (0111, 0.749)
        (1111, 0.201)

    };
    \end{axis}
\end{tikzpicture}
\end{subfigure}

\begin{subfigure}{.475\textwidth}
\begin{tikzpicture}[scale=0.7]
    \begin{axis}[
    title={\large{$N=7$}},
        width=12cm, 
        height=8cm, 
         xlabel={Initial Genotype},
        ylabel={Probability},
        ymin=0, ymax=1,
        xtick=data,
        symbolic x coords={1000, 0100, 0010, 0001, 1100, 1010, 1001, 0110, 0101, 0011, 1110, 1101, 1011, 0111, 1111},
                xticklabels={\textbf{1000}, 0100, 0010, \textbf{0001}, \textbf{1100}, \textbf{1010}, \textbf{1001}, 0110, \textbf{0101}, {0011}, 1110, \textbf{1101}, \textbf{1011}, \textbf{0111}, \textbf{1111}},
        ytick={0, 0.2, 0.4, 0.6, 0.8, 1.0},
        xticklabel style={rotate=90, anchor=east},
        grid=both,
        grid style = {dashed}
    ]
\addplot[
        only marks,
        mark= x,
        color=darkgreen,line width=3pt,
        mark size=7.5pt
    ] coordinates {
        (1000, 0.652)
        (0100, 0.957)
        (0010, 0.913)
        (0001, 0.753)
        (1100, 0.531)
        (1010, 0.791)
        (1001, 0.483)
        (0110, 0.924)
        (0101, 0.61)
        (0011, 0.816)
        (1110, 0.732)
        (1101, 0.452)
        (1011, 0.684)
        (0111, 0.823)
        (1111, 0.465)

    };

\addplot[
	only marks,
	fill = red,
	mark=triangle,line width=1.5pt,
	color=red,
	mark size=6pt
	] coordinates {
        (1000, 0.843)
        (0100, 0.988)
        (0010, 0.989)
        (0001, 0.883)
        (1100, 0.754)
        (1010, 0.931)
        (1001, 0.697)
        (0110, 0.98)
        (0101, 0.776)
        (0011, 0.956)
        (1110, 0.92)
        (1101, 0.661)
        (1011, 0.841)
        (0111, 0.952)
        (1111, 0.752)

    };

\addplot[
        only marks,
        mark=triangle*,
        color=red,
        mark size=6pt
    ] coordinates {
        (1000, 0.651)
        (0100, 0.959)
        (0010, 0.934)
        (0001, 0.763)
        (1100, 0.535)
        (1010, 0.812)
        (1001, 0.481)
        (0110, 0.933)
        (0101, 0.617)
        (0011, 0.835)
        (1110, 0.777)
        (1101, 0.447)
        (1011, 0.694)
        (0111, 0.829)
        (1111, 0.493)

    };


\addplot[
        only marks,
        mark= + ,
        color=darkgreen,line width=3pt,
        mark size = 7.5pt
    ] coordinates {
        (1000, 0.862)
        (0100, 0.988)
        (0010, 0.987)
        (0001, 0.869)
        (1100, 0.797)
        (1010, 0.928)
        (1001, 0.756)
        (0110, 0.982)
        (0101, 0.807)
        (0011, 0.956)
        (1110, 0.927)
        (1101, 0.757)
        (1011, 0.865)
        (0111, 0.956)
        (1111, 0.765)

    };

\addplot[
        only marks,
        fill = blue,
        mark=o,line width=1.5pt,
        color=blue,
        mark size = 3.5pt
    ] coordinates {
        (1000, 0.867)
        (0100, 0.989)
        (0010, 0.989)
        (0001, 0.875)
        (1100, 0.803)
        (1010, 0.93)
        (1001, 0.762)
        (0110, 0.983)
        (0101, 0.822)
        (0011, 0.958)
        (1110, 0.932)
        (1101, 0.762)
        (1011, 0.867)
        (0111, 0.958)
        (1111, 0.763)

    };

\addplot[
        only marks,
        mark=*,
        color=blue,
        fill = blue,
        mark size = 3.5pt
    ] coordinates {
        (1000, 0.464)
        (0100, 0.953)
        (0010, 0.93)
        (0001, 0.416)
        (1100, 0.45)
        (1010, 0.783)
        (1001, 0.052)
        (0110, 0.922)
        (0101, 0.24)
        (0011, 0.817)
        (1110, 0.771)
        (1101, 0.052)
        (1011, 0.458)
        (0111, 0.795)
        (1111, 0.201)
        
    };


    \end{axis}
\end{tikzpicture}

\end{subfigure}
\begin{subfigure}{.5\textwidth}
\begin{tikzpicture}[scale=0.7]
    \begin{axis}[
    title={\large{$N=8$}},
        width=12cm, 
        height=8cm, 
         xlabel={Initial Genotype},
        ymin=0, ymax=1,
        xtick=data,
        symbolic x coords={1000, 0100, 0010, 0001, 1100, 1010, 1001, 0110, 0101, 0011, 1110, 1101, 1011, 0111, 1111},
                xticklabels={\textbf{1000}, 0100, 0010, \textbf{0001}, \textbf{1100}, \textbf{1010}, \textbf{1001}, 0110, \textbf{0101}, 0011, 1110, \textbf{1101}, \textbf{1011}, \textbf{0111}, \textbf{1111}},
        ytick={0, 0.2, 0.4, 0.6, 0.8, 1.0},
        xticklabel style={rotate=90, anchor=east},
        grid=both,
        grid style = {dashed}
    ]
    
    \addplot[
        only marks,
        mark= x,
        color=darkgreen,line width=3pt,
        mark size=7.5pt
    ] coordinates {
        (1000, 0.652)
        (0100, 0.957)
        (0010, 0.913)
        (0001, 0.753)
        (1100, 0.633)
        (1010, 0.839)
        (1001, 0.588)
        (0110, 0.944)
        (0101, 0.745)
        (0011, 0.861)
        (1110, 0.732)
        (1101, 0.452)
        (1011, 0.684)
        (0111, 0.823)
        (1111, 0.585) 
    };

    \addplot[
        only marks,
        fill = red,
        mark=triangle,line width=1.5pt,
        color=red,
        mark size=6pt
    ] coordinates {
        (1000, 0.843)
        (0100, 0.988)
        (0010, 0.989)
        (0001, 0.883)
        (1100, 0.839)
        (1010, 0.951)
        (1001, 0.795)
        (0110, 0.988)
        (0101, 0.876)
        (0011, 0.972)
        (1110, 0.92)
        (1101, 0.661)
        (1011, 0.841)
        (0111, 0.952)
        (1111, 0.779)

    };

    \addplot[
        only marks,
        mark=triangle*,
        color=red,
        mark size=6pt
    ] coordinates {
        (1000, 0.651)
        (0100, 0.959)
        (0010, 0.934)
        (0001, 0.763)
        (1100, 0.635)
        (1010, 0.854)
        (1001, 0.586)
        (0110, 0.953)
        (0101, 0.755)
        (0011, 0.878)
        (1110, 0.777)
        (1101, 0.447)
        (1011, 0.694)
        (0111, 0.829)
        (1111, 0.582)

    };

        \addplot[
        only marks,
        mark= + ,
        color=darkgreen,line width=3pt,
        mark size = 7.5pt
    ] coordinates {
        (1000, 0.862)
        (0100, 0.988)
        (0010, 0.987)
        (0001, 0.869)
        (1100, 0.86)
        (1010, 0.955)
        (1001, 0.818)
        (0110, 0.987)
        (0101, 0.854)
        (0011, 0.973)
        (1110, 0.927)
        (1101, 0.757)
        (1011, 0.865)
        (0111, 0.956)
        (1111, 0.84)

    };

    \addplot[
        only marks,
        fill = blue,
        mark=o,line width=1.5pt,
        color=blue,
        mark size = 3.5pt
    ] coordinates {
        (1000, 0.867)
        (0100, 0.989)
        (0010, 0.989)
        (0001, 0.875)
        (1100, 0.865)
        (1010, 0.957)
        (1001, 0.821)
        (0110, 0.989)
        (0101, 0.861)
        (0011, 0.975)
        (1110, 0.932)
        (1101, 0.762)
        (1011, 0.867)
        (0111, 0.958)
        (1111, 0.844)

    };

    \addplot[
        only marks,
        mark=*,
        color=blue,
        fill = blue,
        mark size = 3.5pt
    ] coordinates {
        (1000, 0.464)
        (0100, 0.953)
        (0010, 0.93)
        (0001, 0.416)
        (1100, 0.466)
        (1010, 0.811)
        (1001, 0.058)
        (0110, 0.95)
        (0101, 0.328)
        (0011, 0.879)
        (1110, 0.771)
        (1101, 0.052)
        (1011, 0.458)
        (0111, 0.795)
        (1111, 0.383)
        
    };
    \end{axis}
\end{tikzpicture}
\end{subfigure}

\end{figure}
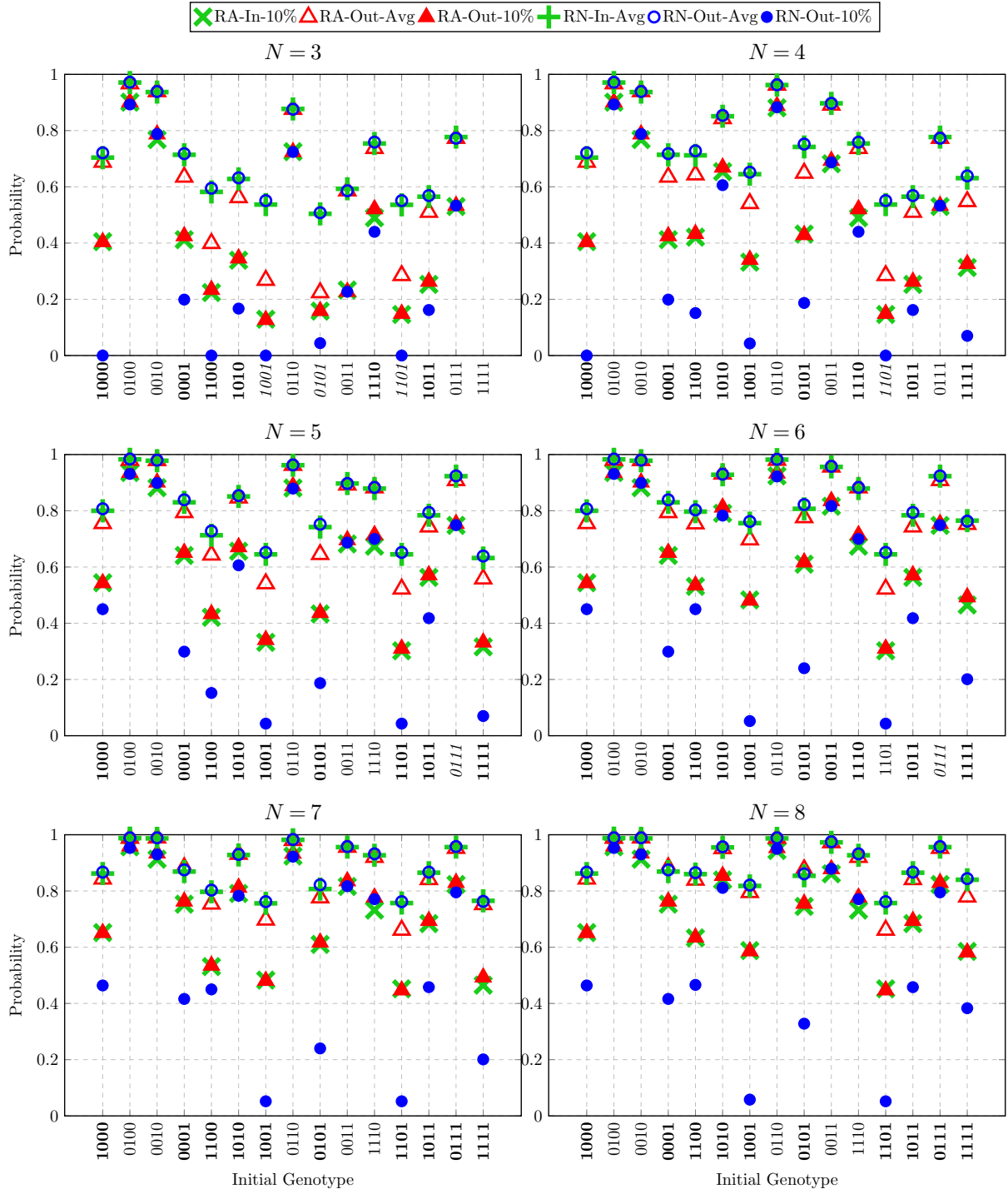

Finally, we compare the results of static and dynamic versions in terms of their compatibility with risk-aversion. We observe that risk-averse solutions are somewhat ``more successful" with the dynamic version as the number of \textit{bad} genotypes is much smaller and the sacrifice from the average performance becomes almost negligible compared to the gain from the worst 10\% performance as~$N$ increases. This suggests that risk-averse objective works much better with the dynamic version.

\section{Conclusions}
\label{sec:conclusions}

In this paper, we study the risk-averse Antibiotics Time Machine Problem. We consider two versions of the problem: static and dynamic. For both   versions, we propose scenario-based MILP formulations. Since these formulations do not scale well with scenario size, we develop scenario batch decomposition algorithms with  risk-averse subproblems. We introduce several algorithmic enhancements to further improve the computational efficiency.  We test the efficacy  of our approach on a real dataset. Our extensive computational experiments suggest that risk-averse solutions provide a significantly better worst-case performance compared to risk-neutral solutions, which comes with a deterioration from the average performance. We observe that the gains in terms of worst-case performance dominate the losses from the average performance as $N$ increase, and the benefits are even more significant for the dynamic version.

\subsubsection*{Funding}
This work was supported by the  Scientific and Technological Research Council of Turkey [grant number 120C151].

\bibliographystyle{informs2014} 
\bibliography{references}

\renewcommand{\theHchapter}{A\arabic{chapter}}

\begin{APPENDICES} 

\section{Matrix and Scenario Construction}
\label{app:matrixConstruction}

An important aspect of the Antibiotics Time Machine Problem is the calculation of transition probability  matrices for which we follow  \citet{mira2015rational}. 
Suppose that we are given a vector of growth rate measurements of genotypes for a specific antibiotic, denoted by
${\omega}\in\mathbb{R}_+^d$. Let us denote the set of neihgboring genotype pairs by $J$, and take a pair $(j,j')$ from the set $J$. If  ${\omega}_j'>{\omega}_j$, then the transition probability  from genotype $j$ to $j'$ is positive, otherwise the probability is zero. In our paper, we use the so-called 
\textit{correlated probability model}, in which the transition probabilities    are calculated as follows \citep{mira2015rational}:
\[
M_{j,j'} (\bm{\omega}) := \frac{ \max\{0, \omega_{j'} - \omega_{j} \} }{ \sum_{ j'':(j'',j) \in J}  \max\{0, \omega_{j''} - \omega_{j} \} }, \quad (j,j') \in J . 
\]




Now, suppose that the growth rate measurement for antibiotic $k$, denoted by ${\bm{\omega^k}}$, is a random variable and its $i$-th entry takes a value from the set $\{w_i^{k,r} : r=1,\dots,R\}$ with equal probability (see, e.g., \citet{mira2017statistical,mesum2024}). Assuming independence between entries, we conclude that ${\bm{\omega^k}}$ takes values from a set $ \{ {\bm{\omega^{k,s}}} : s\in\mathcal{S} \}$  for some index set   $\mathcal{S} \ :=\{1,\dots,R\}^d$.  To give some perspective, \citet{mira2017statistical} record the growth rates of $K=23$ antibiotics for $R=12$ times in which case there are $d=2^4$ genotypes. 
Due to the astronomically large size of the set $\mathcal{S}$, it is not  directly possible to incorporate all scenarios. Instead, we take a sample $\mathcal{H} \subseteq \mathcal{S}^K$, where $|\mathcal{H} |\ll | \mathcal{S}|^K$. 

\section{Effect of Enhancements for the Static Version with $N=4$}
\label{app:detailedComputations}

In order to compare the effect of enhancements, we solve the  risk-averse static version  for each initial genotype with $N=4$ and report the results in Table~\ref{tab:Static-N4-appendix}. 
Comparing Setting~0 and Setting~5, we see considerable savings in terms of computational time. In these experiments,  the symmetry breaking constraints and symmetry-enhanced Cartesian Cuts are the most effective enhancements followed by warm starting.

\begin{table}[h]
\centering
\small
\caption{Effect of enhancements for the risk-averse static version in terms of computational effort for $N=4$ (no iteration limit is enforced in this experiment). }\label{tab:Static-N4-appendix}
\begin{tabular}{|c|cr|cr|cr|cr|cr|cr|}
\hline
 & \multicolumn{2}{c|}{\textbf{Setting~0}} & \multicolumn{2}{c|}{\textbf{Setting~1}} & \multicolumn{2}{c|}{\textbf{Setting~2}} & \multicolumn{2}{c|}{\textbf{Setting~3}} & \multicolumn{2}{c|}{\textbf{Setting~4}} & \multicolumn{2}{c|}{\textbf{Setting~5}} \\ \hline
$\bm{i_I}$ & \multicolumn{1}{r}{\textbf{\#}} & \multicolumn{1}{r|}{\textbf{Time}} & \multicolumn{1}{r}{\textbf{\#}} & \multicolumn{1}{r|}{\textbf{Time}} & \multicolumn{1}{r}{\textbf{\#}} & \multicolumn{1}{r|}{\textbf{Time}} & \multicolumn{1}{r}{\textbf{\#}} & \multicolumn{1}{r|}{\textbf{Time}} & \multicolumn{1}{r}{\textbf{\#}} & \multicolumn{1}{r|}{\textbf{Time}} & \multicolumn{1}{r}{\textbf{\#}} & \multicolumn{1}{r|}{\textbf{Time}} \\ \hline
\textbf{1000} & 1 & 478 & 1 & 517 & 1 & 540 & 1 & 527 & 1 & 483 & 1 & 535 \\
\textbf{0100} & 1 & 107 & 1 & 229 & 1 & 128 & 1 & 231 & 1 & 88 & 1 & 235 \\
\textbf{0010} & 1 & 223 & 1 & 226 & 1 & 220 & 1 & 220 & 1 & 218 & 1 & 235 \\
\textbf{0001} & 4 & 2441 & 1 & 593 & 2 & 1246 & 1 & 694 & 2 & 1154 & 1 & 697 \\
\textbf{1100} & 2 & 1020 & 1 & 491 & 2 & 1039 & 1 & 547 & 2 & 1104 & 1 & 553 \\
\textbf{1010} & 1 & 323 & 1 & 307 & 1 & 299 & 1 & 281 & 1 & 282 & 1 & 274 \\
\textbf{1001} & 2 & 1068 & 2 & 1052 & 2 & 1121 & 2 & 1162 & 2 & 1193 & 2 & 1218 \\
\textbf{0110} & 1 & 302 & 1 & 331 & 1 & 321 & 1 & 314 & 1 & 261 & 1 & 318 \\
\textbf{0101} & 7 & 4599 & 3 & 1840 & 3 & 2187 & 3 & 2055 & 3 & 2094 & 3 & 2206 \\
\textbf{0011} & 2 & 590 & 2 & 662 & 2 & 590 & 2 & 649 & 2 & 644 & 2 & 625 \\
\textbf{1110} & 3 & 1226 & 2 & 768 & 2 & 742 & 2 & 743 & 2 & 684 & 2 & 734 \\
\textbf{1101} & 6 & 3156 & 3 & 1519 & 5 & 2927 & 3 & 1675 & 3 & 1566 & 3 & 1631 \\
\textbf{1011} & 3 & 852 & 2 & 570 & 2 & 647 & 2 & 636 & 2 & 619 & 2 & 627 \\
\textbf{0111} & 3 & 1251 & 2 & 848 & 2 & 822 & 2 & 804 & 2 & 816 & 2 & 852 \\
\textbf{1111} & 4 & 2163 & 3 & 1692 & 3 & 1708 & 3 & 1808 & 3 & 1771 & 3 & 1855 \\ \hline
\textbf{Avg} & 2.7 & 1320 & 1.7 & 776 & 2.0 & 969 & 1.7 & 823 & 1.9 & 865 & 1.7 & 840 \\ \hline
\end{tabular}
\end{table}

\end{APPENDICES}

\end{document}